\documentclass{amsart}
\usepackage{amsmath,amssymb}
\usepackage[utf8]{inputenc}
\usepackage{esvect}
\usepackage{amssymb, mathrsfs}
\usepackage{enumerate}
\usepackage[colorinlistoftodos]{todonotes}
\usepackage{verbatim}
\usepackage{mathrsfs}
\usepackage{hyperref}
\usepackage{amsthm, url}
\usepackage{xcolor}
\usepackage{tikz,pgf}
\usepackage{tikz-cd}
\usetikzlibrary{decorations.text}

\newtheorem{theorem}{Theorem}[section]
\newtheorem*{fibering lemma}{Fibering Lemma}
\newtheorem*{decomposition lemma}{Decomposition Lemma}
\newtheorem*{hurewicz theorem}{Hurewicz Theorem}
\newtheorem*{theorem A}{Theorem A}
\newtheorem{lemma}[theorem]{Lemma}
\newtheorem{proposition}[theorem]{Proposition}
\newtheorem{corollary}[theorem]{Corollary}

\theoremstyle{definition}
\newtheorem{definition}[theorem]{Definition}

\newtheorem{example}[theorem]{Example}

\newtheorem{question}[theorem]{Question}

\theoremstyle{remark}
\newtheorem*{proofA}{Proof of Theorem A}

\renewcommand{\vec}{\mathbf}
\newcommand{\U}[1]{{\mathcal{U}_{#1}}}
\newcommand{\V}[1]{{\mathcal{V}_{#1}}}
\newcommand{\W}[1]{{\mathcal{W}_{#1}}}
\newcommand{\sE}{\mathscr{E}}
\newcommand{\sF}{\mathscr{F}}

\newcommand{\df}[1]{{{\em #1}}}

\newcommand{\NN}{\mathbb{N}}
\newcommand{\ZZ}{\mathbb{Z}}

\renewcommand{\epsilon}{\varepsilon}

\DeclareMathOperator{\as}{asdim}
\DeclareMathOperator{\con}{con}

\DeclareMathOperator{\ord}{ord}

\makeatother

\begin{document}

\title{Coarse free products}
\author{G.~Bell}
\address{Department of Mathematics and Statistics, The University of North Carolina at Greensboro, Greensboro, NC 27412, USA}
\email{gcbell@uncg.edu}

\author{A.~Lawson}
\address{Department of Mathematics and Statistics, The University of North Carolina at Greensboro, Greensboro, NC 27412, USA}
\email{azlawson@uncg.edu}
\date{\today}

\begin{abstract}
We define a notion of free product for coarse spaces that generalizes the corresponding notion of a free product for groups. We show that free products preserve coarse properties such as coarse property C, finite coarse decomposition complexity, and coarse property A. We also give an upper bound estimate on the dimension of a coarse free product in terms of the dimension of its factors.
\end{abstract}
\keywords{free product, coarse category, property {A}, asymptotic property {C}. Primary 54E15; Secondary 54E34, 20E06}

\maketitle

\section{Introduction}
The free product of groups arises in the computation of fundamental groups of path-connected spaces expressed as a union of path-connected subspaces with simply connected intersection. Given groups $A=\langle S_A\mid R_A\rangle$ and $B=\langle S_B\mid R_B\rangle$, the free product is the group $A\ast B$ with presentation $\langle S_A\sqcup S_B \mid R_A\sqcup R_B\rangle$. Given a class of groups it is natural to ask whether this class is closed under the operation of free product. This motivated the study of asymptotic dimension ($\as$) of free products~\cite{bell-dranishnikov2001}, where the Bass-Serre theory was applied to prove that if $A$ and $B$ are two groups with $\as A\le n$ and $\as B\le n$, then $\as A\ast B\le 2n+1$. This upper bound was also shown to hold for a metric notion of free product of metric spaces. Later, the upper bound estimate for the asymptotic dimension of a free product of groups was sharpened~\cite{BDK} and a formula was established for the asymptotic dimension of (the more general) free product with amalgamation~\cite{Dranish-amalgams}. 

Other coarse properties of groups, including finite decomposition complexity~\cite{gty2013}, property A~\cite{bell2003}, and asymptotic property C~\cite{bell-nagorko2016}, are preserved by the operation of free product. In the cases of finite decomposition complexity and property A, the method of proof has become standard: one applies the Bass-Serre theory, union permanence, and fibering permanence, (see~\cite{guentner2014}). Other techniques were required in the case of asymptotic property C~\cite{bell-nagorko2016}.

Motivated by our work with coarse direct products~\cite{BL} and the metric notions of free product described by Bell and Dranishnikov~\cite{bell-dranishnikov2001} and Bell and Nagórko~\cite{bell-nagorko2016}, we define a coarse free product of coarse spaces. We show that our coarse free product preserves finite dimension, coarse property A, coarse notions of decomposition complexity, and coarse property C. We also provide an upper bound for the dimension of a coarse free product in terms of the dimension of its factors. Our coarse free product is analogous to the free product of discrete metric spaces, but is distinct from the metric free product when applied to general metric spaces (see Example~\ref{ex:strictinequality}).

The paper is organized as follows. In the next section we give basic definitions related to the coarse category. We then define the coarse free product in Section~\ref{sec:coarsefreeproduct}. In Section~\ref{sec:permanenceviaunions}, we use techniques akin to Guentner's~\cite{guentner2014} to exhibit permanence of many coarse properties with respect to the coarse free product. In Section~\ref{sec:asdim} we provide an upper bound for the dimension of a coarse free product. We conclude by showing how the arguments of Bell and Nagórko~\cite{bell-nagorko2016} can be adapted to show that coarse property C is preserved by coarse free products in Section~\ref{sec:Property C}.

The authors are grateful to Elisa Hartmann for pointing out two errors in a previous version of this paper.  The definition of coarse fiber and the proofs of Lemma~\ref{lem:Guentner} and \hyperref[ThmA]{Theorem~A} have been changed to address these errors.

\section{Preliminaries}\label{sec:preliminaries}

To describe Roe's coarse category, we first recall the operations of composition and inverse on the pair groupoid. Let $X$ be any set. 
The \df{composition} of two subsets $E\subset X\times X$ and $F\subset X\times X$ is the (possibly empty) set $E\circ F := \{(x,z)\mid \exists y\in X, (x,y)\in E, (y,z)\in F\}$. The \df{inverse} of $E\subset X\times X$ is the set $E^{-1} := \{(y,x)\mid (x,y)\in E\}$. The \df{diagonal} is the set $\Delta\subset X\times X$ defined by $\Delta:= \{(x,x)\in X\times X\}$. It will be convenient to allow points in our spaces to be infinitely far apart; we use the term $\infty$-\df{metric} to mean a ``metric'' that can take the value $+\infty$.

\begin{definition}\cite[Definition 2.3]{roe2003}
Let $X$ be a set and let $\sE$ be a collection of subsets of $X\times X$. We say that $\sE$ is a \df{coarse structure} on $X$ (and call the pair $(X,\sE)$ a \df{coarse space}) if $\sE$ contains the diagonal, and is closed under inverses, finite unions, subsets, and compositions.
The elements of $\sE$ are called \df{entourages}.\end{definition}

\begin{definition} Let $(X,\sE)$ be a coarse space. Let $\U{}$ be a collection of subsets of $X$. Let $E\in \sE$. We say that the family $\U{}$ is $E$-\df{disjoint} if $(U\times U')\cap E=\emptyset$ whenever $U\neq U'$ are elements of $\U{}$. Let $K\in\sE$. We say that $\U{}$ is $K$-\df{bounded} if $\cup_{U\in \U{}}U\times U\subset K$. We call $\U{}$ bounded if it is $K$-bounded for some entourage $K$. A subset $B$ of the coarse space $(X,\sE)$ will be called $K$-\df{bounded} if $\{B\}$ is $K$-bounded for some entourage $K$.
\end{definition}

Given an entourage $E$, we write $E^k$ to mean the $k$-fold composition $E\circ E\circ\cdots\circ E$. By $E^0$, we mean $E\cap\Delta$.  We define $D_E: X\times X \to \mathbb{Z}_{\ge 0} \cup \{+\infty\}$ by $D_E(x,y) = \min\{k \colon (x,y)\in  E^k\}$.

\begin{proposition}~\cite[Proposition 3.6]{bell-moran-nagorko2016}\label{Distance} 
Let $(X,\sE)$ be a coarse space and fix an entourage $E$ that is \df{symmetric} in the sense that $E=E^{-1}$. Then we have that 
\begin{enumerate}
	\item $D_E$ is symmetric;
	\item $D_E(x,y) = 0$ iff $x = y$;
	\item $D_E(x,y) \leq D_E(x,z) + D_E(z,y)$; and
    \item for every $w, z \in X$ and each $A \subset X$ we have      \[ |D_E(w,A) - D_E(z, A)| \leq D_E(w,z),\]
    where $D_E(x, A) = \inf \{ D_E(x,a) \colon a \in A \}$.
\end{enumerate}
\end{proposition}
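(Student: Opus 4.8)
The plan is to recognize $D_E$ as the analogue of a word metric, where $D_E(x,y)$ counts the minimal number of $E$-steps joining $x$ to $y$, and to reduce all four claims to two elementary algebraic facts about iterated compositions in the pair groupoid. First I would establish that $(E^k)^{-1} = (E^{-1})^k$ for all $k\ge 0$, and that $E^{j}\circ E^{k}\subseteq E^{j+k}$ for all $j,k\ge 0$. The first follows by induction from the identity $(F\circ G)^{-1}=G^{-1}\circ F^{-1}$, together with $\Delta^{-1}=\Delta$ to handle the boundary term $E^0 = E\cap\Delta$. The second follows from associativity of composition when $j,k\ge 1$, and from $E^0\subseteq\Delta$ (so that composing with a diagonal pair merely collapses one coordinate) in the boundary cases $j=0$ or $k=0$; note that the inclusion, rather than equality, is all the triangle inequality will need. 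These two facts are the engine for everything that follows.

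Symmetry (1) is then immediate: since $E$ is symmetric, $(E^k)^{-1} = (E^{-1})^k = E^k$, so $(x,y)\in E^k \iff (y,x)\in E^k$ for every $k$, whence the minimizing exponent is the same for $(x,y)$ and $(y,x)$. For (2), the forward direction is automatic because $E^0 = E\cap\Delta\subseteq\Delta$ forces $x=y$; the reverse direction is where one uses that the diagonal pair $(x,x)$ lies in $E^0$, i.e.\ that $(x,x)\in E$. This is the single place the argument genuinely requires $\Delta\subseteq E$, which I would either fold into the standing hypotheses on the entourage or flag explicitly.

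The triangle inequality (3) is a direct consequence of the composition inclusion: writing $j=D_E(x,z)$ and $k=D_E(z,y)$ (the inequality being vacuous if either is $+\infty$), we have $(x,z)\in E^j$ and $(z,y)\in E^k$, hence $(x,y)\in E^{j}\circ E^{k}\subseteq E^{j+k}$, so $D_E(x,y)\le j+k$. Finally (4) is the standard ``distance to a set is $1$-Lipschitz'' deduction from (1) and (3): for each $a\in A$ we have $D_E(w,a)\le D_E(w,z)+D_E(z,a)$, and taking the infimum over $a\in A$ on both sides gives $D_E(w,A)\le D_E(w,z)+D_E(z,A)$; swapping $w$ and $z$ and invoking symmetry yields the reverse estimate, and combining the two produces the absolute-value bound (assuming $A\neq\emptyset$, so the infima are meaningful).

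I expect the only real friction to be bookkeeping rather than anything conceptual: carrying the convention $E^0 = E\cap\Delta$ correctly through the inverse and composition identities, and handling the value $+\infty$ consistently (treating $D_E(x,y)=+\infty$ when no $k$ works, so that any inequality with an infinite right-hand side holds trivially). The subtlety in (2) noted above, that the reverse implication needs $(x,x)\in E$, is the one point at which the statement quietly leans on the entourage containing the diagonal.
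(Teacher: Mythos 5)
Your argument is correct and is the standard one; the paper itself gives no proof of this proposition, citing it as Proposition 3.6 of Bell--Moran--Nag\'orko, so there is nothing to diverge from. Your observation that part (2) genuinely needs $(x,x)\in E$ --- because this paper's convention $E^0=E\cap\Delta$ makes $D_E(x,x)=0$ fail for an entourage missing the diagonal --- is a real and correctly flagged point, and the rest (the inclusions $(E^k)^{-1}=(E^{-1})^k$ and $E^j\circ E^k\subseteq E^{j+k}$ driving symmetry and the triangle inequality, plus the $1$-Lipschitz deduction for (4)) is exactly what the cited proof does.
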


Let $(X,\sE)$ and $(Y,\sF)$ be coarse spaces and let $f$ be a map $f:X\to Y$. The map $f$ is said to be \df{uniformly expansive} if for each $E\in\sE$ we have $(f\times f)(E) \in \sF$. Two maps $f,f':X\to Y$ are said to be \df{close} if $\{(f(x),f'(x))\mid x\in X\}\in\sF$. Finally, the coarse spaces $X$ and $Y$ are said to be coarsely equivalent if there are uniformly expansive\footnote{In the present setting there is no need to require that such maps are also \df{proper}, i.e. that the inverse of each bounded set it bounded, cf.~\cite{roe2003}.} maps $f:X\to Y$ and $g:Y\to X$ such that the compositions in both directions are close to the identity maps. A \df{coarse fiber} of $f$ at scale $F\in\sF$ is any set $A\subset X$ such that there exists a $y\in Y$ such that $f(A)$ is contained in the \df{$F$-ball} $F^{-1}[\{y\}]$; i.e., there is a $y\in Y$ such that for every $a\in A$, $(y,f(a))\in F$.

It will be convenient to describe properties of families of coarse spaces that are satisfied uniformly. To achieve this, we use the notion of total space, following Guentner~\cite{guentner2014}.

\begin{definition}
Let $(X,\sE)$ be a coarse space. Let $\U{} = \{U_{i}\}_{i\in J}$ be a collection of subsets of $X$. We define a coarse space called the \df{total space} $T(\U{};J)$ of the $\{U_i\}_{i\in J}$ as follows. The underlying set is the disjoint union, $\bigsqcup_{i\in J} U_i$. The entourages are the disjoint unions \[\bigsqcup_{i\in J} \big(E\cap (U_i\times U_i)\big)\subset \bigsqcup_{i\in J}\left(U_i\times U_i \right),\] where $E$ ranges over $\sE$.

Let $\mathscr{P}$ be a \df{coarse property}; i.e., a property of a coarse space that is invariant of coarse equivalence. We say that the family $\{U_i\}_{i\in J}$ has property $\mathscr{P}$ \df{uniformly} if the total space $T(\U{};J)$ has $\mathscr{P}$.
\end{definition}

\section{The Coarse Free Product} \label{sec:coarsefreeproduct}
Let $(X,\sE)$ be a coarse space. Fix a basepoint $x_0\in X$. Consider the collection $\ast X$ of words in the alphabet $X\setminus\{x_0\}$ along with the empty word $\boldsymbol{\epsilon}$, which we identify with $x_0$. By way of notation, letters set in bold type will be elements of $\ast X$. We define the concatenation $\mathbf{x}\cdot \mathbf{x'}$ of two words in the usual way, so that $\mathbf{x}\cdot\boldsymbol{\epsilon}=\boldsymbol{\epsilon}\cdot\mathbf{x}=\mathbf{x}$. We will often write concatenation as juxtaposition unless extra emphasis is needed. We will also allow words in $\ast X$ and elements $x\in X$ to be concatenated to form new words; i.e., we do not distinguish between words consisting of a single letter and the letters themselves. For each distinct pair of elements $\mathbf{x}$ and $\mathbf{x'}\in \ast X$ there is a unique $\mathbf{a}\in\ast X$ with the properties that $\mathbf{x}=\mathbf{a}b\mathbf{c}$ and $\mathbf{x}'=\mathbf{a}b'\mathbf{c'}$ with $b\neq b'$, $b, b'$ both in $X$, and $\mathbf{c}, \mathbf{c'}$ both in $\ast X$. (Note that we allow the words $\mathbf{a},\vec{c}$ or $\vec{c'}$ to be $\boldsymbol{\epsilon}$ and at most one of $b$ or $b'$ may be $x_0$ when the corresponding $\vec c$ or $\vec c'$ following it is $\boldsymbol{\epsilon}$.)  When necessary, we use the notation $\mathbf{a}=\mathbf{x}\wedge \mathbf{x'}$ for this element. 

Let $E\in\sE$ be given. Define $\|\boldsymbol{\epsilon}\|_E=\|\boldsymbol{\epsilon}\|^E=0$; for nonempty $\vec{x} = x_1x_2\cdots x_k\in \ast X$ define $\|\vec{x}\|_E = \sum_{i=1}^k D_E(x_0, x_i) $ and $\|\vec{x}\|^E = \sum_{i=1}^k D_E(x_i, x_0) $. In the case that $E$ is symmetric, $\|\vec{x}\|_E = \|\vec{x}\|^E$. Moreover define $D^*_E : \ast X\times \ast X\to \ZZ_{\ge 0}\cup\{\infty\}$ by \[D^*_E(\vec{x},\vec{x}') =\begin{cases} 0 & \vec{x}=\vec{x}'\\ D_E(b,b') + \|\vec{c}\|_E + \|\vec{c}'\|^E& \vec{x}\neq\vec{x}'.\end{cases}\] 

\begin{definition}
Let $(X,\sE)$ be a coarse space, let $E\in\sE$, and let $x\in X$. We define the \df{symmetric ball of size $E$ about $x$} to be $B(x,E) = E_x\cup E^x$ where $E_x = \{y\in X\mid (x,y)\in E\}$ and $E^x = \{y\in X\colon (y,x)\in E\}$
\end{definition}

\begin{proposition}
If $E\in \sE$ is symmetric, then $D^*_E$ is an $\infty$-metric on $\ast X$. \qed
\end{proposition}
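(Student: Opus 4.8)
The plan is to verify the three axioms of an $\infty$-metric: non-degeneracy ($D^*_E(\vec x,\vec x')=0$ iff $\vec x=\vec x'$), symmetry, and the triangle inequality. Throughout we use that, since $E$ is symmetric, $D_E$ is symmetric and $\|\vec c\|_E=\|\vec c\|^E$ for every word; write $\|\cdot\|$ for this common value and $D$ for $D_E$. Non-degeneracy and symmetry are immediate. If $\vec x\neq \vec x'$, then in the decomposition $\vec x=\vec a\, b\,\vec c$, $\vec x'=\vec a\, b'\,\vec c'$ we have $b\neq b'$, so Proposition~\ref{Distance}(2) gives $D(b,b')\geq 1$; hence $D^*_E(\vec x,\vec x')\geq D(b,b')>0$. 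Symmetry holds because swapping $\vec x$ and $\vec x'$ interchanges $b\leftrightarrow b'$ and $\vec c\leftrightarrow\vec c'$, while both $D(b,b')$ and the pair of suffix norms are unchanged by Proposition~\ref{Distance}(1) together with $\|\cdot\|_E=\|\cdot\|^E$.

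The triangle inequality $D^*_E(\vec x,\vec z)\leq D^*_E(\vec x,\vec y)+D^*_E(\vec y,\vec z)$ is the main obstacle, and the plan is a case analysis driven by the tree structure of prefixes. Order $\ast X$ by the prefix relation; this is the order of a rooted tree (a trie) in which $\vec x\wedge\vec x'$ is exactly the meet (deepest common prefix). The key combinatorial input is the ultrametric property of longest common prefixes: for any three words, among the three pairwise meets $\vec x\wedge\vec y$, $\vec y\wedge\vec z$, $\vec x\wedge\vec z$ the two shallowest coincide and the third is a (possibly equal) extension of them. Assuming $\vec x,\vec y,\vec z$ are distinct, since the degenerate cases are trivial, this leaves three cases according to where $\vec y$ branches relative to $\vec a:=\vec x\wedge\vec z$: either $\vec y$ leaves the common trunk at or above $\vec a$, so $\vec x\wedge\vec y=\vec y\wedge\vec z\preceq\vec a$; or $\vec y$ lies on the $\vec z$-side below $\vec a$, so $\vec x\wedge\vec y=\vec a\preceq\vec y\wedge\vec z$; or, symmetrically, on the $\vec x$-side.

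In each case I would write out the three distances using the decompositions, expand each suffix norm by telescoping it through the relevant branch point, using that $\|\vec c\,\vec c'\|=\|\vec c\|+\|\vec c'\|$, and reduce the desired inequality to a single application of the triangle inequality for $D$ from Proposition~\ref{Distance}(3). For instance, in the first case the bound reduces to $D(p,q)\leq D(p,x_0)+D(x_0,q)$ for the letters $p,q$ of $\vec x,\vec z$ immediately after $\vec a$, the remaining suffix norms matching up on both sides; in the second case it reduces to $D(x_0,r')\leq D(x_0,r)+D(r,r')$ for the letters $r,r'$ at the meet $\vec y\wedge\vec z$, with a nonnegative slack term left over, and the third case then follows from the second by the symmetry already established.

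The only real bookkeeping is tracking how the common-prefix decomposition of the pair $(\vec x,\vec z)$ refines those of $(\vec x,\vec y)$ and $(\vec y,\vec z)$; the ultrametric property is precisely what guarantees that the ``switch'' terms $D(\cdot,\cdot)$ and the telescoped suffix norms align so that a single triangle inequality in $(X,D_E)$ closes each case. I expect this alignment, rather than any individual estimate, to be the part requiring the most care.
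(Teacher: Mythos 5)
Your proof is correct and follows the standard route: the paper itself omits the argument (the \qed appears with no proof body, implicitly deferring to the proof that $d^*$ is a metric on the metric free product in the cited references), and that omitted argument is precisely your direct verification, with the triangle inequality handled by a case analysis on where the middle word branches from the longest common prefix of the outer two. Your identification of the two essentially distinct cases and the single application of the triangle inequality for $D_E$ (through a branching letter or through $x_0$) that closes each one matches the standard computation, so no further comparison is needed.
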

\begin{proposition}
Let $n\in \mathbb{Z}_{\ge 0}$ and put $\langle E,n\rangle=\{(\vec{x},\vec{x}')\in\ast X\times \ast X\colon D_E^*(\vec{x},\vec{x}')\le n\}$. Define the collection $\ast\sE$ to be the subset closure of $\{\langle E,n\rangle\colon E\in\sE, n\in\mathbb{N}\}$. Then, $\ast\sE$ is a coarse structure on $\ast X$.
\end{proposition}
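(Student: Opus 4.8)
The plan is to verify directly the five defining axioms of a coarse structure for $\ast\sE$: containment of the diagonal, and closure under inverses, finite unions, subsets, and compositions. Closure under subsets is free, since $\ast\sE$ is by definition the subset closure of the generating family $\{\langle E,n\rangle\}$; consequently for each remaining axiom it suffices to establish the relevant containment on the generators themselves, as an arbitrary element $F\subseteq\langle E,n\rangle$ of $\ast\sE$ inherits whatever containment the generator satisfies.

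The technical backbone is a monotonicity observation that I would record first. Whenever $E\subseteq F$ in $\sE$ we have $E^k\subseteq F^k$ for every $k$, so $D_F\le D_E$ pointwise; hence $\|\cdot\|_F\le\|\cdot\|_E$ and $\|\cdot\|^F\le\|\cdot\|^E$, and therefore $D^*_F\le D^*_E$, giving $\langle E,n\rangle\subseteq\langle F,n\rangle$. Likewise $\langle E,n\rangle\subseteq\langle E,m\rangle$ whenever $n\le m$. Together these say the generating family is directed: any finite collection of generators sits inside a single $\langle E,n\rangle$, where $E$ is the (finite) union of the entourages involved (available since $\sE$ is closed under unions) and $n$ the maximum of the scales. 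Moreover, because $\sE$ is closed under inverses and unions and contains $\Delta$, each $E$ lies in a symmetric entourage $\tilde E=E\cup E^{-1}\cup\Delta$, and monotonicity gives $\langle E,n\rangle\subseteq\langle\tilde E,n\rangle$. Thus the subset closure is unchanged if we restrict to symmetric generators, and I may assume $E$ symmetric throughout; this is precisely what licenses the use of the preceding proposition, which guarantees that $D^*_E$ is an $\infty$-metric when $E=E^{-1}$.

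With these reductions the three easy axioms follow at once. For the diagonal, $D^*_E(\vec x,\vec x)=0\le n$ gives $\Delta\subseteq\langle E,n\rangle$, so $\Delta\in\ast\sE$ by subset closure. For inverses, symmetry of the $\infty$-metric $D^*_E$ makes $\langle E,n\rangle$ a symmetric subset of $\ast X\times\ast X$, so $\langle E,n\rangle^{-1}=\langle E,n\rangle$; then for $F\subseteq\langle E,n\rangle$ we get $F^{-1}\subseteq\langle\tilde E,n\rangle\in\ast\sE$. For finite unions, directedness places $\langle E_1,n_1\rangle\cup\langle E_2,n_2\rangle$ inside the single generator $\langle E_1\cup E_2,\max(n_1,n_2)\rangle$, so the union of two members of $\ast\sE$ is again a subset of a generator.

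The one step with genuine content is closure under composition, and this is where the triangle inequality carries the argument. Given generators $\langle E_1,n_1\rangle$ and $\langle E_2,n_2\rangle$, set $E=\tilde E_1\cup\tilde E_2$, which is symmetric, and $n=n_1+n_2$. If $(\vec x,\vec z)\in\langle E_1,n_1\rangle\circ\langle E_2,n_2\rangle$, pick a witness $\vec y$ with $D^*_{E_1}(\vec x,\vec y)\le n_1$ and $D^*_{E_2}(\vec y,\vec z)\le n_2$; by monotonicity both distances only decrease when remeasured with the larger $E$, so $D^*_E(\vec x,\vec y)\le n_1$ and $D^*_E(\vec y,\vec z)\le n_2$. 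The triangle inequality for the $\infty$-metric $D^*_E$ then yields $D^*_E(\vec x,\vec z)\le n_1+n_2=n$, i.e. $(\vec x,\vec z)\in\langle E,n\rangle$. Hence $\langle E_1,n_1\rangle\circ\langle E_2,n_2\rangle\subseteq\langle E,n\rangle$, and passing to subsets gives composition closure for all of $\ast\sE$. The main obstacle is therefore not any single estimate but the bookkeeping: arranging the monotonicity and symmetrization so that a common symmetric entourage is available and the triangle inequality from the previous proposition can be invoked. Once that framework is set up, each axiom reduces to a one-line verification.
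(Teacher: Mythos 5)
Your proof is correct and follows essentially the same route as the paper: a direct verification of the five axioms using the containments $\langle E_1,n_1\rangle\cup\langle E_2,n_2\rangle\subseteq\langle E_1\cup E_2,\,\cdot\,\rangle$ and $\langle E_1,n_1\rangle\circ\langle E_2,n_2\rangle\subseteq\langle E_1\cup E_2,n_1+n_2\rangle$. The only difference is that you make explicit the monotonicity of $D^*_E$ in $E$ and the reduction to symmetric generators (so that the triangle inequality for $D^*_E$ is legitimately available), details the paper's proof leaves implicit.
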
 

\begin{proof} 
We must show that $\ast\sE$ (a) contains the diagonal, (b) is closed under inverses, (c) is closed finite under unions, (d) is closed under subsets, and (e) is closed under compositions.

(a) It is clear that $\langle\Delta,0\rangle$ contains the diagonal in $\ast X\times \ast X$.

(b) Given $L\in \ast \sE$, we take a symmetric $E$ such that $\langle E,n\rangle$ contains $L$. It is easy to see that $\langle E^{-1},n\rangle=\langle E,n\rangle^{-1}$.

(c) Let $L$ and $L'$ be given elements of $\ast \sE$. Find $E, E'\in\sE$ and $n,n'\in\mathbb{N}$ such that $L\subset \langle E,n\rangle$ and $L'\subset \langle E',n'\rangle$. Then $L\cup L'\subset \langle E,n\rangle\cup\langle E', n'\rangle\subset \langle E\cup E',n+n'\rangle$.

(d) This holds by definition.

(e) Let $L$ and $L'$ be given elements of $\ast \sE$. Find $E, E'\in\sE$ and $n,n'\in\mathbb{N}$ such that $L\subset \langle E,n\rangle$ and $L'\subset \langle E',n'\rangle$. Then, $\langle E,n\rangle\circ \langle E',n'\rangle\subset \langle E\cup E',n+n'\rangle$.

\end{proof}

\begin{definition}
Let $(X,\sE)$ be a coarse space with basepoint $x_0\in X$. The coarse space $(\ast X, \ast\sE)$ constructed above is called the \df{coarse free product of $(X,\sE)$}.
\end{definition}

For ease of notation, we defined the unary free product, following Bell and Nagórko~\cite{bell-nagorko2016}. One can obtain the free product $X\ast Y$ of pointed spaces $(X,x_0)$ and $(Y, y_0)$ as the subset of words whose letters alternate between $X$ and $Y$ inside of $\ast (X\sqcup Y/x_0\sim y_0)$.

\begin{definition} \label{def:order}
Let $(X,\sE)$ be a coarse space with basepoint $x_0\in X$. Let $\vec x\neq \boldsymbol{\epsilon}$. We say the \df{order (or length)} of $\vec{x}$ is $n$ (and write $\ord(\vec x)=n$), if $\vec x$ can be expressed as $\vec{x} = x_1x_2\cdots x_n\in\ast X$ with $x_i\neq x_0$, for all $i$. We define $\ord(\boldsymbol{\epsilon})=0$.
\end{definition}

Next, we wish to compare the coarse free product with the metric free product. A metric space $(X,d)$ carries a natural coarse structure called the \df{bounded coarse structure}, in which entourages are subsets of the form $\{(x,y)\in X\times X\colon \sup d(x,y)<\infty\}$~\cite{roe2003}.

Suppose $(X,d)$ is a metric space and $x_0\in X$. The (metric) free product $\ast X$ with respect to $x_0$ was defined by Bell and Nagórko~\cite{DecompThm}, see also~\cite{bell-dranishnikov2001}. They showed that the function $d^*:\ast X\times \ast X\to[0,\infty)$ is a metric, where $d^\ast$ is defined by $d^\ast(\vec{x},\vec{x})=0$ and if $\vec x\neq \vec x'$ are expressed as $\vec x=\vec abx_1\cdots x_n$, $\vec x'=\vec ab'x_1'\cdots x_m'$, then \[d^*(\vec{a}bx_1\ldots x_n, \vec{a}b'x_1'\ldots x_m') = d(b,b') + \sum_{i=1}^n d(x_i, x_0) + \sum_{j=1}^m d(x_j,x_0).\] 

\begin{proposition}\label{prop:metricvcoarse}
Suppose $(X, d)$ is a metric space. Let $\mathscr{B}_d$ represent the bounded coarse structure on $X$ associated to $d$, let $d^*$ represent the metric on the free product $\ast X$, and let $\mathscr{B}_{d^*}$ represent the bounded coarse structure associated to $d^*$. The coarse free product structure $\ast\mathscr{B}_d$ is finer than the bounded coarse space $\mathscr{B}_{d^*}$. That is $\ast\mathscr{B}_d\subset \mathscr{B}_{d^*}$. In the case that $(X,d)$ is a uniformly discrete (see below) metric space, the two structures are equal.
\end{proposition}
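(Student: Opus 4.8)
The plan is to reduce both inclusions to a single pointwise comparison between the integer-valued ``distance'' $D_E$ and the metric $d$, exploiting the fact that $D^*_E$ and $d^*$ are assembled from $D_E$ and $d$, respectively, by the \emph{same} formula applied to the common-prefix decomposition $\vec x=\vec a b\vec c$, $\vec x'=\vec a b'\vec c'$. Once the comparison of $D_E$ with $d$ is in hand, each inclusion follows by applying it term by term to these formulas.

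For the inclusion $\ast\mathscr{B}_d\subset\mathscr{B}_{d^*}$, it suffices to check that each generator $\langle E,n\rangle$ is $d^*$-bounded, since $\mathscr{B}_{d^*}$ is closed under subsets and $\ast\mathscr{B}_d$ is the subset closure of these generators. Fix such a generator and set $R=\sup_{(x,y)\in E}d(x,y)<\infty$. First I would prove the estimate $d(x,y)\le R\,D_E(x,y)$ for all $x,y\in X$: choosing a chain $x=z_0,z_1,\dots,z_k=y$ with $(z_{i-1},z_i)\in E$ and $k=D_E(x,y)$, the triangle inequality for $d$ gives $d(x,y)\le\sum_i d(z_{i-1},z_i)\le kR$. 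Applying this summand by summand to the definitions of $d^*$ and $D^*_E$ (using $d(x_i,x_0)=d(x_0,x_i)\le R\,D_E(x_0,x_i)$ for the $\vec c$ terms, and the analogous bounds for the $b,b'$ and $\vec c'$ terms) yields $d^*(\vec x,\vec x')\le R\,D^*_E(\vec x,\vec x')$ for all pairs. Hence on $\langle E,n\rangle$ we have $d^*\le Rn$, so $\langle E,n\rangle\in\mathscr{B}_{d^*}$.

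For the reverse inclusion in the uniformly discrete case, let $\delta>0$ satisfy $d(x,y)\ge\delta$ for all distinct $x,y$, and let $L\in\mathscr{B}_{d^*}$ with $S=\sup_{(\vec x,\vec x')\in L}d^*(\vec x,\vec x')<\infty$. I would take $E=\{(x,y): d(x,y)\le S\}\in\mathscr{B}_d$ and bound $D^*_E$ uniformly on $L$. The key point is that every letter of a word lies in $X\setminus\{x_0\}$, so $d(x_i,x_0)\ge\delta$; since $\sum_i d(x_i,x_0)\le d^*(\vec x,\vec x')\le S$, the tails $\vec c$ and $\vec c'$ have order at most $S/\delta$. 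Moreover each relevant pair $(x_0,x_i)$, $(x_j',x_0)$, and $(b,b')$ has $d$-value at most $S$, hence lies in $E$, so each nonzero $D_E$-term equals $1$ and $\|\vec c\|_E=\ord(\vec c)$, $\|\vec c'\|^E=\ord(\vec c')$. Therefore $D^*_E(\vec x,\vec x')\le 1+\ord(\vec c)+\ord(\vec c')\le 1+2S/\delta$, and with $n=\lceil 1+2S/\delta\rceil$ we obtain $L\subset\langle E,n\rangle\in\ast\mathscr{B}_d$.

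The forward inclusion is essentially automatic; the real content is the reverse direction, where uniform discreteness is precisely the hypothesis that makes the argument run. Without it, a $d^*$-bounded set may contain arbitrarily long words whose letters lie close to $x_0$: such words have small $d^*$-displacement but unbounded $D^*_E$, since every letter still contributes a full unit to $\|\cdot\|_E$, so no single $\langle E,n\rangle$ can capture them, consistent with the strictness in Example~\ref{ex:strictinequality}. I would also dispatch the edge cases of the decomposition in which $b$ or $b'$ equals $x_0$ (with the adjacent tail empty); these affect only the single $D_E(b,b')$ term, which remains at most $1$, so they are harmless.
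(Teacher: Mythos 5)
Your proof is correct and follows essentially the same route as the paper's: the forward inclusion by comparing $d^*$ with $D_E^*$ term by term over the common-prefix decomposition (your intermediate estimate $d\le R\,D_E$ just sharpens the paper's $3Rn$ bound to $Rn$), and the reverse inclusion by using uniform discreteness to bound the orders of the tails and observing that each relevant pair lies in a single bounded entourage $E$, so that $L\subset\langle E,n\rangle$. No gaps.
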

\begin{proof}

Let $L\in\ast\mathscr{B}_d$. We may assume $L$ is a subset of an entourage of the form $\langle E,n\rangle$ where $E \subseteq \{(y,y')\in X\times X\mid d(y,y') < R\}$ for some $n\in\mathbb{N}$ and $R\in\mathbb{R}$. Suppose $(\vec{x},\vec{x}')\in L$. Then, with $\vec a=\vec{x}\wedge \vec{x}'$,  \[n\ge D_E^*(\vec{x},\vec{x}') = D_E^*(\vec{a}b\vec{c}, \vec{a}b'\vec{c}')=D_E(b,b')+\|\vec{c}\|_E+\|\vec{c}'\|^E,\] where $\ord(\vec{c})< n$ and $\ord(\vec{c}')< n$. But this means that $d^*(\vec{x},\vec{x}')< 3Rn$ thus $L\in\mathscr{B}_{d^*}$. 

Suppose now that $(X,d)$ is a uniformly discrete metric space; i.e., suppose that $r = \inf\{d(y,y')\mid y\neq y'\in X\}>0$. Let $F\in \mathscr{B}_{d^*}$; then there is some $R\in\mathbb{R}$ such that $F \subseteq \{(\vec{x},\vec{x}')\in\ast X\times\ast X\mid d^*(\vec{x},\vec{x}')< R\}$. Take $k\in\mathbb{N}$ such that $(k-1)r\le R<kr$. With this $k$, we see that if $(\vec{x},\vec{x}') = (\vec{a}b\vec{c}, \vec{a}b'\vec{c}')\in F$ then $\ord(b\vec{c}) \le k$ and $\ord(b'\vec{c}')\le k$. Thus $F\subseteq \langle E,k\rangle\in\ast\mathscr{B}_d$, where $E=\{(x,x')\in X\times X\mid d(x,x')<R\}$. 
\end{proof}

\begin{example}\label{ex:strictinequality}
Suppose that $(X,d)$ is a metric space with basepoint $x_0$ in which there exists a sequence of distinct points $\{x_i\}_i$ converging to $x_0\in X$ with the property that $\sum_id(x_0,x_i)<1$. Then, all elements of the sequence of pairs $\{(x_0,x_0\cdot x_1\cdots x_i)\}_i$ belong to $\{(\vec{x},\vec{x}')\mid d^\ast(\vec{x},\vec{x}')\le 1\}$, but there is no $E\in \mathscr{B}_d$ such that $\{(x_0,x_0\cdot x_1\cdots x_i)\}_i$ is in $\langle E,k\rangle$ for any fixed $k$. Thus, the inclusion in Proposition~\ref{prop:metricvcoarse} is strict.
\end{example}

The sequence in Example~\ref{ex:strictinequality} shows that the identity map is not a coarse equivalence between the bounded coarse structure on the metric free product and the coarse free product taken with respect to the bounded coarse structure on $X$. We conjecture that in cases such as Example~\ref{ex:strictinequality} no coarse equivalence exists.

\begin{question}
Let $(X,d)$ be a metric space. Suppose that the spaces $(\ast X,\ast\mathscr{B}_d)$ and $(X,\mathscr{B}_{d^*})$ coarsely equivalent. Does it follow that $X$ is uniformly discrete? 
\end{question}

\section{Free-product permanence via unions and fibering} \label{sec:permanenceviaunions}

In this section, we show that several coarse properties are preserved by coarse free products. Our approach is similar to the one given by Guentner~\cite{guentner2014} for metric spaces. Indeed, the only challenge is to replace the real parameters describing the metric disjointness and the metric uniform diameter bound with suitably chosen entourages and generators for these entourages. 

We consider a property $\mathscr{P}$ that is a coarse invariant, e.g. finite asymptotic dimension, or finite coarse decomposition complexity. We prove that such a property $\mathscr{P}$ is preserved by the coarse free product construction whenever trees have $\mathscr{P}$ and it satisfies so-called excisive union permanence and fibering permanence, described below.

\begin{definition} Let $\mathscr{P}$ be a coarse property. Suppose that $(X,\sE)$ is a coarse space. We say that $\mathscr{P}$ satisfies \df{excisive union permanence} if, whenever $X$ is expressed as a union $X=\bigcup_{\alpha\in J} X_\alpha$ such that
\begin{enumerate}
\item $\{X_\alpha\}_{\alpha\in J}$ has uniform $\mathscr{P}$ and 
\item for every $E\in \sE$ there is a $Y_E\subset X$ with $\mathscr{P}$ such that $\{X_\alpha\setminus Y_E\}_{\alpha\in J}$ is $E$-disjoint,
\end{enumerate}
it follows that $X$ has $\mathscr{P}$.

\end{definition}
\begin{definition} Let $\mathscr{P}$ be a coarse property. Suppose that $(X,\sE)$ is a coarse space. We say that $\mathscr{P}$ satisfies \df{fibering permanence} if, whenever $f:X\to Y$ is a uniformly expansive map to a coarse space $(Y,\sF)$ such that $Y$ has $\mathscr{P}$ and for each $F\in\sF$ the coarse fibers of $f$ at scale $F$ have $\mathscr{P}$ uniformly, it follows that $X$ has $\mathscr{P}$.
\end{definition}

\begin{theorem A} \label{ThmA}
Let $(X, \sE)$ be a coarse space with basepoint $x_0$. Let $\mathscr{P}$ be a coarse property of coarse spaces that satisfies excisive union permanence and fibering permanence. Suppose trees have $\mathscr{P}$. Then, the coarse free product $(\ast X, \ast\sE)$ has property $\mathscr{P}$ whenever $(X,\sE)$ does. 
\end{theorem A}

\begin{lemma}\label{lem:Guentner}
Let $(X,\sE)$ be a coarse space with basepoint $x_0\in X$ and consider the free product $(\ast X, \ast\sE)$. Let $\mathscr{P}$ be a coarse property. Suppose that $A\subset \ast X$ has $\mathscr{P}$. Then, for any subset $Y\subset \ast X$, the family $\{\vec{y}\cdot A\}_{\vec{y}\in Y}$ has property $\mathscr{P}$ uniformly.
\end{lemma}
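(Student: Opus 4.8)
The plan is to reduce everything to a single observation — that left-translation is an isometry for each of the functions $D^*_E$ — and then to transport property $\mathscr{P}$ from $A$ to the whole family through a coarse equivalence of total spaces. Since $\mathscr{P}$ is a coarse invariant, it suffices to produce such an equivalence between $T(\{\vec{y}A\}_{\vec{y}\in Y}; Y)$ and the total space of a constant family.

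First I would record the key geometric fact: for every $\vec{y}\in\ast X$ and every $E\in\sE$, the left-translation $\lambda_{\vec{y}}\colon\ast X\to\ast X$, $\vec{x}\mapsto\vec{y}\vec{x}$, satisfies $D^*_E(\vec{y}\vec{x},\vec{y}\vec{x}')=D^*_E(\vec{x},\vec{x}')$ for all $\vec{x},\vec{x}'$. Because words over $X\setminus\{x_0\}$ are concatenated without any cancellation, prepending $\vec{y}$ never merges the last letter of $\vec{y}$ with the first letter of $\vec{x}$ or $\vec{x}'$; consequently the longest common prefix transforms as $(\vec{y}\vec{x})\wedge(\vec{y}\vec{x}')=\vec{y}(\vec{x}\wedge\vec{x}')$. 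Writing $\vec{a}=\vec{x}\wedge\vec{x}'$, $\vec{x}=\vec{a}b\vec{c}$, and $\vec{x}'=\vec{a}b'\vec{c}'$, the translated words are $\vec{y}\vec{a}b\vec{c}$ and $\vec{y}\vec{a}b'\vec{c}'$ with the \emph{same} $b,b',\vec{c},\vec{c}'$. As the defining formula for $D^*_E$ reads off only $b,b',\vec{c},\vec{c}'$ and ignores the common prefix, the two distances agree. The only genuine work here is the boundary-case bookkeeping for the $\wedge$-operation — when $\vec{a}$, $\vec{c}$, or $\vec{c}'$ is $\boldsymbol{\epsilon}$, or when $b$ or $b'$ equals $x_0$ because one word is a prefix of the other — and I expect this case analysis, though elementary, to be the main thing to get exactly right.

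In particular $\lambda_{\vec{y}}$ restricts to a bijection $A\to\vec{y}A$ preserving every $D^*_E$, with inverse $\lambda_{\vec{y}}^{-1}$ (strip the prefix $\vec{y}$) doing the same. I would assemble these into a single map $\Phi\colon T(\{\vec{y}A\}_{\vec{y}\in Y}; Y)\to T(\{A\}_{\vec{y}\in Y}; Y)$ into the total space of the constant family, acting on the copy indexed by $\vec{y}$ as $\lambda_{\vec{y}}^{-1}$. Since $D^*_E$ is preserved in each copy, $\Phi$ carries the generating entourage $\bigsqcup_{\vec{y}}\big(\langle E,n\rangle\cap((\vec{y}A)\times(\vec{y}A))\big)$ into $\bigsqcup_{\vec{y}}\big(\langle E,n\rangle\cap(A\times A)\big)$, so $\Phi$ is uniformly expansive; the identical computation applied to $\lambda_{\vec{y}}$ shows $\Phi^{-1}$ is uniformly expansive. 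As $\Phi$ is a bijection, it is a coarse equivalence of the two total spaces.

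Finally, by coarse invariance of $\mathscr{P}$, the space $T(\{\vec{y}A\}_{\vec{y}\in Y})$ has $\mathscr{P}$ if and only if $T(\{A\}_{\vec{y}\in Y})$ does. The latter is the total space of the constant family every member of which is the single space $A$, so I would conclude that it has $\mathscr{P}$ directly from the hypothesis on $A$: the witnessing structure for $A$ is replicated verbatim on each copy, which is precisely what uniformity of a constant family requires. The conceptual point worth flagging is that when $Y$ is infinite one cannot coarsely identify the total space with $A$ itself — the copies sit infinitely far apart — so the reduction must be to the constant family, not to $A$.
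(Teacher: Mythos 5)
Your proposal is correct and takes essentially the same route as the paper's proof: both reduce the statement to the translation-invariance identity $D^*_K(\vec{y}\cdot\vec{a},\vec{y}\cdot\vec{a}')=D^*_K(\vec{a},\vec{a}')$ and use the prefix-stripping map to give a coarse equivalence between $T(\{\vec{y}\cdot A\}_{\vec{y}\in Y};Y)$ and the total space of the constant family $\{A\}_{\vec{y}\in Y}$, which is then observed to have $\mathscr{P}$. Your extra attention to the $\wedge$-bookkeeping and to why one must target the constant-family total space rather than $A$ itself only makes explicit what the paper leaves implicit.
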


\begin{proof}
We form the total space $T(A;Y)$ of the family $\{A\}_{\vec{y}\in Y}$ (indexed by $\vec{y}\in Y$) and observe that this total space has property $\mathscr{P}$.  Note that we write $T(A,Y)$ to mean $T(\{A\},Y)$.

We define a map $f:T(\vec{y}\cdot A;Y)\to T(A;Y)$ by $(\vec{y}\cdot\vec{a},\vec{y})\mapsto (\vec{a},\vec{y})$. It remains to show that this is a coarse equivalence.  We prove one direction of the equivalence and note that this follows from the fact that for any $K\in\sE$ and any elements $\vec{y}, \vec{a}, \vec{a}'\in \ast X$ we have $D_K^\ast(\vec{a},\vec{a}')=D_K^\ast(\vec{y}\cdot\vec{a},\vec{y}\cdot\vec{a}').$ The other direction is analogous.

An entourage $F$ in the space $T(\vec{y}\cdot A; Y)$ has the form $\bigsqcup_{\vec{y}\in Y}\big(E\cap(\vec{y}\cdot A\times\vec{y}\cdot A)\big)$, where $E\in\ast\sE$. Suppose that $K\in \sE$ and $n\in\NN$ have the property that $E\subset\langle K,n\rangle$. We claim that $(f\times f)(F)\subset \bigsqcup_{\vec{y}\in Y}\langle K,n\rangle \cap (A\times A)$, which is an entourage in $T(A;Y)$. Indeed, if $(\vec x, \vec x')\in F$, then there is some $\vec y\in Y$ such that $\vec x=\vec y\cdot \vec a$ and $\vec x'=\vec y\cdot \vec a'$ for some $\vec a$ and $\vec a'$ in $A$. Since $E\subset \langle K, n\rangle$, we see that $n\ge D^\ast_K(\vec x,\vec x')=D^\ast_K(\vec a, \vec a')$, so that $(\vec a, \vec a')\in \langle K, n \rangle$. Thus, $(f\times f)(\vec x, \vec x')\in \bigsqcup_{\vec{y}\in Y}\langle K,n\rangle \cap (A\times A)$, as required.
\end{proof}

\begin{lemma}\label{lem:concat}
Let $\mathscr{P}$ be a coarse property of coarse spaces that satisfies excisive union permanence. Let $(X,\sE)$ be a coarse space with basepoint $x_0\in X$. Put $X^{(n)}=\{\vec x\in \ast X\colon \ord(\vec x)=n\}$ (see Definition~\ref{def:order}). If $X$ has property $\mathscr{P}$, then for each $n\in\NN$, $X^{(n)}$ has property $\mathscr{P}$. 
\end{lemma}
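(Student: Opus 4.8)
The plan is to prove this by induction on $n$, using excisive union permanence at the inductive step. The base cases are easy: $X^{(0)} = \{\boldsymbol{\epsilon}\}$ is a single point (trivially has $\mathscr{P}$), and $X^{(1)}$ consists of words of length one, which is essentially $X \setminus \{x_0\}$ sitting inside $\ast X$. I would first check that $X^{(1)}$ is coarsely equivalent to a subspace of $X$ and hence inherits $\mathscr{P}$; concretely, on $X^{(1)}$ the restriction of $D^*_E$ to distinct single letters $x, x'$ gives $D_E(x,x')$ (since $\vec c = \vec c' = \boldsymbol{\epsilon}$ and $b = x$, $b' = x'$), so the inclusion induces a coarse equivalence with $X$.

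**For the inductive step**, suppose $X^{(n-1)}$ has $\mathscr{P}$, and write each word of order $n$ as a first letter followed by a word of order $n-1$. That is, I would express $X^{(n)} = \bigcup_{b \in X \setminus \{x_0\}} b \cdot X^{(n-1)}_b$, where $X^{(n-1)}_b$ denotes the words of order $n-1$ whose first letter is not $b$ (so that $b$ followed by such a word still has order exactly $n$, with $b$ the genuine first letter). To apply excisive union permanence I must verify its two hypotheses. For the uniformity hypothesis (1), I would invoke Lemma~\ref{lem:Guentner}: the family $\{b \cdot X^{(n-1)}_b\}_b$ is a family of left-translates of subsets of $X^{(n-1)}$, and since each such set embeds coarsely into $X^{(n-1)}$ (which has $\mathscr{P}$), the family has $\mathscr{P}$ uniformly. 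Here I would be careful to set up the application of Lemma~\ref{lem:Guentner} with $A = X^{(n-1)}$ and the index set $Y = X\setminus\{x_0\}$, noting that left-translation by $b$ preserves $D^*_K$-distances.

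**The main obstacle is verifying the excision hypothesis (2):** given $E \in \sE$, I must produce a set $Y_E$ with $\mathscr{P}$ so that after deleting it, the pieces $b \cdot X^{(n-1)}_b$ become $\langle E, n_0 \rangle$-disjoint for a prescribed scale. The point is that two words $b\vec c$ and $b'\vec c'$ with $b \neq b'$ are $D^*_E$-far apart unless $b$ and $b'$ are both close to the basepoint $x_0$: from the formula $D^*_E(b\vec c, b'\vec c') = D_E(b,b') + \|\vec c\|_E + \|\vec c'\|^E$, and since $\|\vec c\|_E \geq D_E(x_0, \text{first letter of } \vec c)$, closeness forces both $b$ and the initial letters to lie near $x_0$. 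So I would take $Y_E$ to be the set of words all of whose letters lie in a fixed bounded ball $B(x_0, E^k)$ about the basepoint (for a suitable $k$ depending on the target scale). This $Y_E$ is a set of words over a bounded alphabet and hence coarsely a point — or at worst a bounded set — so it has $\mathscr{P}$ trivially. I would then check that outside $Y_E$, any two words starting with distinct letters $b \neq b'$ are genuinely $\langle E, m\rangle$-separated.

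The delicate point to get right is the bookkeeping between the scale $E$ at which we want disjointness and the radius $k$ defining $Y_E$: I would choose $Y_E = \{\vec x \in X^{(n)} : \text{every letter of } \vec x \text{ lies in } B(x_0, E^m)\}$ and argue that if $b\vec c \notin Y_E$ or $b'\vec c' \notin Y_E$ with $b \neq b'$, then $D^*_E(b\vec c, b'\vec c') > m$, giving the required $\langle E, m\rangle$-disjointness. Since a word all of whose finitely-boundedly-many letters lie in a bounded region forms a bounded subset of $\ast X$, $Y_E$ is bounded and so has $\mathscr{P}$. Concluding via excisive union permanence then yields $X^{(n)} \in \mathscr{P}$, completing the induction.
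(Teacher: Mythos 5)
Your overall strategy (induction on $n$ plus excisive union permanence, with Lemma~\ref{lem:Guentner} supplying uniformity) is the same as the paper's, but your choice of decomposition breaks the excision step. You index the pieces by the \emph{first} letter $b$ and claim that two words $b\vec c$, $b'\vec c'$ with $b\neq b'$ can only be close if $b$ and $b'$ lie near the basepoint. That is not what the formula says: since $b\vec c\wedge b'\vec c'=\boldsymbol{\epsilon}$, one has $D^*_E(b\vec c,b'\vec c')=D_E(b,b')+\|\vec c\|_E+\|\vec c'\|^E$, so closeness forces the letters of the \emph{tails} to lie near $x_0$ and forces $b$ to be close to $b'$ --- but $b$ and $b'$ may both be arbitrarily far from $x_0$. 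Concretely, take $X=\ZZ$ with the bounded coarse structure, $x_0=0$, $E=\{(x,y):|x-y|\le 1\}$, $n=2$, and the words $1000\cdot 1$ and $1001\cdot 1$: these lie in different pieces of your decomposition, neither has all letters in $B(x_0,E^m)$ for, say, $m=10$, yet $D^*_E=1+1+1=3\le m$. So your $Y_E$ (words all of whose letters lie in $B(x_0,E^m)$) does not render the pieces $\langle E,m\rangle$-disjoint, and the application of excisive union permanence fails. A secondary problem: your restriction to $X^{(n-1)}_b$ (tails not beginning with $b$) is based on a nonexistent reduction in $\ast X$ --- words here are arbitrary strings over $X\setminus\{x_0\}$ with no merging of adjacent letters --- and it causes $\bigcup_b b\cdot X^{(n-1)}_b$ to miss every word of the form $bb\cdots$, so the pieces do not even cover $X^{(n)}$.

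The paper avoids both issues by decomposing at the \emph{last} letter instead: $X^{(n)}=\bigcup_{\vec x\in X^{(n-1)}}\vec x\cdot X^\ast$. Two words $\vec x x$ and $\vec x' x'$ in different pieces have $\vec x\neq\vec x'$, so the first discrepancy occurs among the initial $n-1$ letters and the final letters $x,x'$ always contribute $D_E(x_0,x)+D_E(x',x_0)$ to $D^*_E$. Excising $Y_{\langle E,m\rangle}=X^{(n-1)}\cdot(B(x_0,E^m)\cap X^\ast)$ --- which is coarsely equivalent to $X^{(n-1)}$ and so has $\mathscr{P}$ by the inductive hypothesis, with no appeal to bounded sets having $\mathscr{P}$ --- then leaves pieces whose last letters satisfy $D_E(x_0,x)>m$, giving genuine $\langle E,m\rangle$-disjointness. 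If you wish to keep your first-letter decomposition, the excised set would have to be something like $\{b\vec c:\|\vec c\|_E\le m\}$ (which is coarsely $X^{(1)}$, not a bounded set), but as written the argument does not go through.
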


\begin{proof}
We apply induction. Put $X^*=X\setminus\{x_0\}$. Observe that $X^{(1)}= X^\ast$. Suppose the conclusion holds for $X^{(n-1)}$. Observe that $X^{(n)}=\bigcup_{\mathbf{x}\in X^{(n-1)}} \mathbf{x}\cdot X^\ast.$ 
By Lemma~\ref{lem:Guentner}, the collection $\{\mathbf{x}\cdot X^*\}_{\mathbf{x}\in X^{(n-1)}}$ has $\mathscr{P}$ uniformly.

Let $L\in\ast\sE$ be given. Then, we can find a symmetric entourage $E\in\sE$ and a natural number $m$ such that $L\subset \langle E,m\rangle$. 
Put $Y_{\langle E,m\rangle}=X^{(n-1)}\cdot (B(x_0, E^m)\cap X^\ast)$. It is straightforward to show that $Y_{\langle E,m\rangle}$ is coarsely equivalent to $X^{(n-1)}$. Thus we see that $Y_{\langle E,m\rangle}$ has property $\mathscr{P}$ by the inductive hypothesis. Finally, we show that the collection $\{\mathbf{x}\cdot X^\ast\setminus Y_{\langle E,m\rangle}\colon \vec x\in X^{(n-1)}\}$ is $\langle E,m\rangle$-disjoint. To this end if $\mathbf{w}\neq \mathbf{w}'$ are in this collection then we may write $\mathbf{w} = \vec a\cdot b\cdot \vec c\cdot x$ and $\mathbf{w}' = \vec a\cdot b'\cdot \vec c'\cdot x'$ where $\vec a,\vec c,\vec c'\in \ast X$, $b,b',x,x'\in X^\ast$, $b\neq b'$ and $x,x'\notin E^m$. Then \[D^*_{E}(\mathbf{w},\mathbf{w}') = D_E(b,b') +\|\vec c\|_E + \|\vec c'\|^E + D_E(x_0,x) + D_E(x',x_0)\ge m \]
by assumption since $x,x'\notin E^m$. Therefore the collection is $\langle E,m\rangle$-disjoint (hence $L$-disjoint).

We apply excisive union permanence to complete the proof. 
\end{proof}

\begin{proofA} Let $T$ be a graph whose vertex set is in one-to-one correspondence with the elements of $\ast X$. We denote by $t_\vec{x}$ the vertex of $T$ corresponding to the element $\vec{x}\in\ast X$. We connect two vertices $t_{\vec{x}_1}$ and $t_{\vec{x}_2}$ of $T$ by an edge if and only if there is an $x\in X$ ($x\neq x_0$) for which $\vec{x}_1x=\vec{x}_2$ or $\vec{x}_2x=\vec{x}_1$ (as elements of $\ast X$). It is clear that $T$ is a tree. Give $T$ the bounded coarse structure it inherits as a metric space with the edge-length metric.

Define $f:\ast X\to T$ by $f(\vec{x})=t_\vec{x}$. We claim that $f$ is uniformly expansive. To this end, let $L\in\ast \sE$ be given. Then, we can find an $E\in \sE$ and an $n\in\mathbb{N}$ such that $L\subset \langle E,n\rangle$. Suppose $\vec{x}\neq \vec{x}'$ and that $(\vec{x},\vec{x}')\in L$. Put $\vec{a}=\vec{x}\wedge \vec{x}'$, find $b\neq b'$ in $X$ and sequences $x_1,x_2,\ldots x_m$ and $x'_1,x'_2,\ldots, x'_{m'}$ of elements of $X^\ast$ such that $\vec{x}=\vec{a}bx_1\cdots x_m$ and $\vec{x}'=\vec{a}b'x'_1\cdots x'_{m'}$. 

Then, 
\[n\ge D^\ast_E(\vec{x},\vec{x}')=D_E(b,b')+\sum_{i=1}^m\|x_i\|_E+\sum_{i=1}^{m'}\|x_i'\|^E\ge 1+m+m'=d_T(t_\vec{x},t_{\vec{x}'})-1.\]
Thus, for all pairs $(\vec{x},\vec{x}')\in L$, we have $d_T(t_\vec{x},t_{\vec{x}'})\le n+1$. Therefore, the image $(f\times f)(L)$ is a uniformly bounded set, which means $f$ is uniformly expansive.  

Let $F$ be an entourage in the bounded coarse structure on $T$. Let $A$ be a coarse fiber of $f$ at scale $F$. Then there is a $\vec{y}\in\ast X$ such that for every $a\in A$, $(t_{\vec{y}},f(a))\in F$. Thus, there is an $R>0$ such that $d_T(t_{\vec{y}},f(a))\le R$ and so there is some $\vec y'\in \ast X$ such that $A\subset \vec{y}'\cdot X^{(\le 2R)}$, where $X^{(\le 2R)}$ denotes the collection of elements of $\ast X$ with order at most $2R$.

By Lemma~\ref{lem:Guentner} and Lemma~\ref{lem:concat}, coarse fibers of $f$ have $\mathscr{P}$ uniformly. Since $\mathscr{P}$ is assumed to satisfy fibering permanence, we are done. \qed 
\end{proofA}

We note that any tree (in the bounded coarse structure) has asymptotic dimension $1$ ~\cite{gromov93} and therefore has coarse property A, coarse property C, as well as finite weak coarse decomposition complexity, finite coarse decomposition complexity, and straight finite decomposition complexity~\cite{bell-moran-nagorko2016}.

Guentner shows that coarse property A satisfies fibering and excisive union permanence~\cite[Theorem 6.5, Theorem 6.3]{guentner2014}. Therefore, \hyperref[ThmA]{Theorem A} immediately implies:

\begin{corollary}
Let $(X,\sE)$ be a coarse space with coarse property A. Let $x_0$ be a basepoint. Then, the coarse free product $\ast X$ has coarse property A. 
\end{corollary}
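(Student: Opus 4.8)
The plan is to deduce the corollary directly from \hyperref[ThmA]{Theorem~A} by taking the coarse property $\mathscr{P}$ appearing there to be coarse property A. Once this identification is made, the entire content of the corollary reduces to verifying the three hypotheses of \hyperref[ThmA]{Theorem~A}: that trees enjoy coarse property A, that coarse property A satisfies excisive union permanence, and that coarse property A satisfies fibering permanence.

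Each of these three ingredients is available off the shelf, so the argument I would give is essentially a bookkeeping assembly. For the hypothesis concerning trees, I would invoke the observation recorded immediately above the corollary: every tree, equipped with the bounded coarse structure coming from its edge-length metric, has asymptotic dimension $1$ and therefore has coarse property A. For the two permanence statements, I would cite Guentner's theorems establishing that coarse property A is preserved under excisive unions and under fibering. With all three hypotheses confirmed, \hyperref[ThmA]{Theorem~A} applied to $\mathscr{P} = {}$coarse property A yields that $(\ast X, \ast\sE)$ has coarse property A whenever $(X,\sE)$ does, which is precisely the assertion of the corollary.

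The only point requiring genuine care — and hence the step I expect to be the main obstacle — is reconciling the formulation of the permanence properties used in this paper with the results being cited. The definitions of excisive union permanence and fibering permanence here are phrased for arbitrary coarse spaces in terms of entourages, whereas Guentner's original arguments are carried out for metric spaces using real-valued scale parameters. I would therefore want to confirm that the entourage-based definitions adopted in Section~\ref{sec:permanenceviaunions} match Guentner's notions, or equivalently that his proofs transcribe once the real parameters are replaced by suitably chosen entourages and generators, exactly along the lines flagged at the start of that section. Granting this translation, which is routine given the dictionary already established in the preliminaries, no further obstacle remains and the corollary follows immediately.
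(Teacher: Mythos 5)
Your proposal matches the paper's proof exactly: the corollary is obtained by applying \hyperref[ThmA]{Theorem~A} with $\mathscr{P}$ equal to coarse property A, citing the remark that trees have asymptotic dimension $1$ (hence coarse property A) and Guentner's results that coarse property A satisfies fibering and excisive union permanence. The translation issue you flag is precisely what the paper defers to Guentner's coarse-geometric formulation, so no gap remains.
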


Bell, Moran, and Nag\'orko show that $\mathscr{P}$ satisfies fibering permanence when $\mathscr{P}$ is finite weak coarse decomposition complexity, finite coarse decomposition complexity, or straight finite decomposition complexity~\cite[Theorem 4.14]{bell-moran-nagorko2016}. Moreover, each of these properties satisfy excisive union permanence~\cite[Theorem 4.18]{bell-moran-nagorko2016}.

\begin{theorem}
Let $\mathscr{P}$ be one of the coarse properties: finite weak coarse decomposition complexity, finite coarse decomposition complexity, or straight finite decomposition complexity. Then, $\mathscr{P}$ satisfies fibering permanence and excisive union permanence.
\end{theorem}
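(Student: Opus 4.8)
The plan is to recognize that this theorem is purely a \emph{citation assembly}: the substantive mathematical content has already been established by Bell, Moran, and Nag\'orko~\cite{bell-moran-nagorko2016}, and the task is simply to invoke their Theorem 4.14 for fibering permanence and their Theorem 4.18 for excisive union permanence. First I would observe that the three properties named in the statement---finite weak coarse decomposition complexity, finite coarse decomposition complexity, and straight finite decomposition complexity---are exactly the three properties covered by those two cited results, so no case analysis beyond pointing to the references is required.

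Concretely, the proof is two sentences. For fibering permanence I would write that each of the three properties satisfies it by \cite[Theorem 4.14]{bell-moran-nagorko2016}; for excisive union permanence I would write that each satisfies it by \cite[Theorem 4.18]{bell-moran-nagorko2016}. One point that deserves a word of care is matching the terminology: the cited paper may phrase its union-permanence hypotheses slightly differently (in terms of the $E$-disjointness and the excising set $Y_E$ appearing in the definition given earlier in this excerpt), so I would make sure that the definitions of \emph{excisive union permanence} and \emph{fibering permanence} used here coincide with the hypotheses of the cited theorems, or else remark that they are literal restatements.

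The main (and only) obstacle is therefore bookkeeping rather than mathematics: ensuring that the definitions of coarse fiber, $E$-disjointness, and the uniform-property condition encoded in the total space $T(\U{};J)$ agree with the formulations in \cite{bell-moran-nagorko2016}, since the preamble to this section already flags that the notion of coarse fiber had to be corrected. Assuming the definitions align, a complete proof reads as follows.

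\begin{proof}
Each of the three properties satisfies fibering permanence by \cite[Theorem 4.14]{bell-moran-nagorko2016} and excisive union permanence by \cite[Theorem 4.18]{bell-moran-nagorko2016}.
\end{proof}
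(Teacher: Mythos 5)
Your proposal matches the paper exactly: the paper offers no proof beyond the two sentences immediately preceding the theorem, which cite \cite[Theorem 4.14]{bell-moran-nagorko2016} for fibering permanence and \cite[Theorem 4.18]{bell-moran-nagorko2016} for excisive union permanence for all three properties. Your caveat about checking that the definitions align with the cited formulations is sensible but does not change the substance.
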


Combining this with \hyperref[ThmA]{Theorem A} immediately implies:

\begin{corollary}
Let $(X,\sE)$ be a coarse space with a property $\mathscr{P}$ among finite weak coarse decomposition complexity, finite coarse decomposition complexity, or straight finite decomposition complexity. Let $x_0$ be a basepoint. Then, the coarse free product $\ast X$ has $\mathscr{P}$. 
\end{corollary}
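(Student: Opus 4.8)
The plan is to invoke \hyperref[ThmA]{Theorem A} directly, so that the entire argument reduces to checking that its three hypotheses hold for each of the three properties $\mathscr{P}$ named in the statement. Recall that \hyperref[ThmA]{Theorem A} concludes that $(\ast X, \ast\sE)$ has $\mathscr{P}$ whenever $(X,\sE)$ does, provided that $\mathscr{P}$ satisfies excisive union permanence, $\mathscr{P}$ satisfies fibering permanence, and trees have $\mathscr{P}$.

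First I would dispatch the two permanence hypotheses. These are supplied verbatim by the theorem immediately preceding this corollary, which asserts that finite weak coarse decomposition complexity, finite coarse decomposition complexity, and straight finite decomposition complexity each satisfy both fibering permanence and excisive union permanence (via \cite[Theorem 4.14 and Theorem 4.18]{bell-moran-nagorko2016}). Next I would verify that trees carry $\mathscr{P}$. This is the content of the remark recorded just after the proof of \hyperref[ThmA]{Theorem A}: every tree, in its bounded coarse structure, has asymptotic dimension $1$, and finite asymptotic dimension implies each of the three decomposition-complexity properties under consideration.

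With all three hypotheses of \hyperref[ThmA]{Theorem A} confirmed, and using the standing assumption that $(X,\sE)$ has $\mathscr{P}$, the theorem yields that the coarse free product $(\ast X, \ast\sE)$ has $\mathscr{P}$, which is precisely the claim. I expect no substantive obstacle here, since the corollary is a purely formal consequence of results already in hand; all of the genuine difficulty resides in \hyperref[ThmA]{Theorem A} itself and its supporting results, Lemma~\ref{lem:Guentner} and Lemma~\ref{lem:concat}, which encode the tree-like structure of $\ast X$ and drive the permanence machinery.
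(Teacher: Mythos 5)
Your proposal is correct and matches the paper's argument exactly: the corollary is stated as an immediate consequence of \hyperref[ThmA]{Theorem~A}, the preceding theorem supplying fibering and excisive union permanence for the three decomposition-complexity properties, and the remark that trees have asymptotic dimension $1$ and hence each of these properties. Nothing further is needed.
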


It is not known whether coarse property C satisfies fibering permanence, so we cannot use \hyperref[ThmA]{Theorem A} to show that coarse free products preserve coarse property C. We prove this using techniques similar to Bell and Nagórko~\cite{bell-nagorko2016} in Section~\ref{sec:Property C}.

\section{Asymptotic dimension of a free product}\label{sec:asdim}

By applying permanence results, we can show that finite asymptotic dimension is preserved by taking coarse free products as above. Instead, we apply the techniques of Theorem~A to find an upper bound for the asymptotic dimension of a coarse free product.

The asymptotic dimension of a metric space was defined by Gromov~\cite{gromov93}. Later, Roe~\cite{roe2003} and then Grave~\cite{grave2005} provided definitions of asymptotic dimension of coarse spaces as follows.

\begin{definition} 
Let $(X,\sE)$ be a coarse space.
We say that the \df{asymptotic dimension} of the coarse space $X$ does not exceed $n$ and write $\as X\le n$ if for every $E\in \sE$ there are families $\U{i}$ ($i=0,1,\ldots,n$) of subsets of $X$ such that 
\begin{enumerate}
	\item $\cup_{i=0}^n\U{i}$ covers $X$;
    \item each $\U{i}$ is $E$-disjoint; and 
    \item there is some $K\in\sE$ such that each $\U{i}$ is $K$-bounded.
\end{enumerate}
\end{definition}

The next definition can be deduced from previous ones, but we include it for clarity.

\begin{definition}
Let $n\in\mathbb{Z}_{\ge 0}$. We say that \df{coarse fibers of $f$ have asymptotic dimension of at most n uniformly} if for every $L\in \sE$ and $F\in\sF$ there is some $K\in\sE$ such that whenever $A$ is a coarse fiber at scale $F$, there exist families of subsets, $\U{0}, \U{1}, \ldots, \U{n}$, of $A$ that are $K$-bounded and $L$-disjoint, such that  $\U{0}\cup\U{1}\cup\ldots\cup\U{n}$ covers $A$.
\end{definition}

It is straightforward to show that the notion of uniform asymptotic dimension at most $k$ in the sense of Guentner agrees with the notion from Bell and Dranishnikov~\cite{bell-dranishnikov2001}, when both are suitably translated to the coarse category.

\begin{proposition}
Let $\U{}=\{U_i\}_{i\in J}$ be a collection of subsets of $(X,\sE)$. Then $T(\U{};J)$ has asymptotic dimension at most $k$ if and only if for every $E\in \sE$ there is a $K\in \sE$ and families $\V{0}^i,\V{1}^i,\ldots,\V{k}^i$ such that for each $i$, $\cup_j \V{j}^i$ covers $U_i$, each $\V{j}^i$ is $E$-disjoint, and each $\V{j}^i$ is $K$-bounded. \qed
\end{proposition}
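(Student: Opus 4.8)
The plan is to unwind the definition of asymptotic dimension for the total space $T(\U{};J)$ and match it condition-by-condition with the stated uniform covering property, exploiting the fact that every entourage of $T(\U{};J)$ has the form $\bigsqcup_{i\in J}\big(E\cap(U_i\times U_i)\big)$ for a single $E\in\sE$. Throughout I would identify a subset of $U_i$ with the corresponding subset of the $i$-th summand of $\bigsqcup_{i\in J}U_i$, so that restriction of a family to the $i$-th summand and reassembly of a collection of summand-wise families are inverse operations.

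For the forward direction I would assume $\as T(\U{};J)\le k$, fix $E\in\sE$, and apply the definition to the total-space entourage $\bigsqcup_{i\in J}\big(E\cap(U_i\times U_i)\big)$. This produces families $\W{0},\ldots,\W{k}$ covering $T(\U{};J)$, each disjoint with respect to that entourage, and each bounded by one common total-space entourage, which necessarily has the form $\bigsqcup_{i\in J}\big(K\cap(U_i\times U_i)\big)$ for some $K\in\sE$. I would then set $\V{j}^i$ to be the collection of nonempty restrictions $W\cap U_i$ with $W\in\W{j}$. The covering property passes to each summand at once. The key observation is that the restriction of $\bigsqcup_{i\in J}\big(E\cap(U_i\times U_i)\big)$ to the $i$-th summand is exactly $E\cap(U_i\times U_i)$, so disjointness of $\W{j}$ forces $E$-disjointness of $\V{j}^i$ in $X$, and similarly boundedness forces each $\V{j}^i$ to be $K$-bounded, with the same $K$ for all $i$ and all $j$.

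For the reverse direction I would assume the uniform covering condition and fix an arbitrary entourage of $T(\U{};J)$, which has the form $\bigsqcup_{i\in J}\big(E\cap(U_i\times U_i)\big)$ for some $E\in\sE$. Applying the hypothesis to this $E$ yields a $K\in\sE$ and families $\V{j}^i$, and I would set $\W{j}=\bigcup_{i\in J}\V{j}^i$, regarding each member of $\V{j}^i$ as a subset of the $i$-th summand. Covering of $T(\U{};J)$ is immediate. For disjointness, two members of $\W{j}$ lying in the same summand are $E$-disjoint by hypothesis, while two members lying in distinct summands $i\neq i'$ have product contained in $U_i\times U_{i'}$, which meets no summand appearing in the total-space entourage; thus they are automatically disjoint. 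This \emph{free} disjointness across distinct summands is precisely the feature of the disjoint-union construction that makes the equivalence work. Boundedness of each $\W{j}$ by $\bigsqcup_{i\in J}\big(K\cap(U_i\times U_i))$ follows by collecting the $K$-bounds summand-by-summand.

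The argument is essentially bookkeeping and I do not expect a serious obstacle. The one point requiring care, and the reason the word \emph{uniformly} appears, is that the total-space construction packages a single entourage $E\in\sE$ to control every summand simultaneously, so the parameters $E$ and $K$ extracted in the forward direction are automatically common to all $i\in J$, and the parameters supplied in the reverse direction reassemble into genuine total-space entourages. The only verification is that restriction and reassembly transport each of the three defining conditions along the summand-wise identification, which I have indicated above.
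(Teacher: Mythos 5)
Your argument is correct and is exactly the definitional unwinding the paper has in mind: the paper states this proposition with no proof (just a \qed), treating it as immediate bookkeeping, and your summand-wise restriction/reassembly of the covering families and of the single controlling entourages $E$ and $K$ is the intended verification. No gaps.
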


We need the following union permanence result for asymptotic dimension.

\begin{theorem} \label{cFADUnion} \cite[Theorem 3.17]{bell-moran-nagorko2016}
Let $(X,\sE)$ be a coarse space. Suppose that $X=\bigcup_{\alpha}X_{\alpha}$, where $\as X_\alpha \leq n$ uniformly and for each entourage $L\in\mathscr{E}$ there is a subset $Y_L\subseteq X$ with $\as Y_L \leq n$ such that $\{X_\alpha\setminus Y_L\}$ forms an $L$-disjoint collection. Then, $\as X \leq n$.
\end{theorem}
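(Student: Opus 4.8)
The plan is to verify the definition of $\as X\le n$ directly: fixing an arbitrary symmetric entourage $E\in\sE$, I must produce families $\U{0},\dots,\U{n}$ of subsets of $X$ that cover $X$, are each $E$-disjoint, and are all bounded by a single entourage. First I would feed the scale $E$ into the uniform hypothesis $\as X_\alpha\le n$: by the total-space characterization this yields a single bounding entourage $K$ and, for every $\alpha$, families $\V{0}^{\alpha},\dots,\V{n}^{\alpha}$ covering $X_\alpha$, each $E$-disjoint, each $K$-bounded, with $K$ independent of $\alpha$. I would then apply the excision hypothesis at the entourage $L=E$ to obtain a set $Y=Y_E$ with $\as Y\le n$ such that $\{X_\alpha\setminus Y\}_\alpha$ is $E$-disjoint.

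The easy half is to assemble a good cover of $X\setminus Y$. For each color $i$ set $\mathcal{A}_i:=\bigcup_\alpha\{\,V\cap(X_\alpha\setminus Y):V\in\V{i}^{\alpha}\,\}$. These fragments are $K$-bounded and cover $X\setminus Y$, and the family $\mathcal{A}_i$ is $E$-disjoint: two fragments from the same $\alpha$ inherit the $E$-disjointness of $\V{i}^{\alpha}$, while two fragments from distinct $\alpha\ne\beta$ lie in $X_\alpha\setminus Y$ and $X_\beta\setminus Y$ and are therefore $E$-disjoint by the choice of $Y$. The problem is thereby reduced to a single combination: merge the $n$-dimensional cover $\{\mathcal{A}_i\}$ of $X\setminus Y$ with an $n$-dimensional cover of $Y$, which exists since $\as Y\le n$.

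This last merge is the main obstacle, and it is exactly a coarse finite-union step. A naive split of $X$ into the two regions $Y$ and $X\setminus Y$ fails, because the two covers meet along the ``boundary'' of $Y$ with no separation, destroying $E$-disjointness. I would resolve this by a swallowing construction with asymmetric scales: set $M:=E\circ K\circ E$ (enlarged to be symmetric) and use $\as Y\le n$ at scale $M$ to get families $\W{0},\dots,\W{n}$ covering $Y$, each $M$-disjoint and uniformly bounded. For each color $i$, let every piece $W\in\W{i}$ absorb all fragments of $\mathcal{A}_i$ that are $E$-close to $W$, forming an enlarged set $\tilde W$; put into $\U{i}$ these enlarged pieces together with the leftover fragments of $\mathcal{A}_i$ that are $E$-far from every piece of $\W{i}$. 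The union of the $\U{i}$ still covers $X$, and each $\U{i}$ remains uniformly bounded because an absorbed fragment is $K$-bounded and $E$-close to its host $W$. The point of the scale $M=E\circ K\circ E$ is precisely to preserve $E$-disjointness after absorption: a single $K$-bounded fragment $E$-close to two distinct pieces $W,W'\in\W{i}$ would force $(w,w')\in M$ for some $w\in W$, $w'\in W'$, contradicting $M$-disjointness, and the same estimate separates each enlarged piece from the other pieces of its color; distinct leftover and absorbed fragments stay $E$-apart because $\mathcal{A}_i$ is itself $E$-disjoint. Carrying out this bookkeeping with entourages and their composites in place of metric neighborhoods is the only real work; it parallels the finite-union theorem for asymptotic dimension and goes through without surprises.
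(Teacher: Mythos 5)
The paper offers no proof of this theorem---it is imported verbatim from \cite[Theorem 3.17]{bell-moran-nagorko2016}---but your argument is correct and is essentially the saturated-union proof used there: cover each $X_\alpha\setminus Y_E$ uniformly at scale $E$ with a common bound $K$, then let an $(E\circ K\circ E)$-disjoint uniformly bounded cover of $Y_E$ absorb all $E$-close fragments of the same colour, the composite scale being exactly what guarantees that each fragment has at most one host and that distinct enlarged pieces stay $E$-disjoint. The only detail to make explicit is that $E$ and $K$ should be taken symmetric and containing the diagonal, so that $E\subseteq E\circ K\circ E$ and hence the unenlarged pieces $W\neq W'$ of $\W{i}$ are themselves $E$-disjoint.
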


Theorem~\ref{cFADUnion} immediately implies the following version of Lemma~\ref{lem:concat} for asymptotic dimension:

\begin{lemma}\label{FADconcat}
Let $(X,\sE)$ be a coarse space with $\as(X)\le k$ and fix $x_0\in X$. With $X^{(n)}$ as above, $\as(X^{(n)}) \le k$ for all $n\in\NN$. \qed 
\end{lemma}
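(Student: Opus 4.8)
The plan is to mirror the structure of the proof of Lemma~\ref{lem:concat}, replacing each appeal to abstract excisive union permanence with the concrete union permanence result for asymptotic dimension, namely Theorem~\ref{cFADUnion}. Since the statement claims that the conclusion follows immediately from Theorem~\ref{cFADUnion}, the work is to recognize that the inductive decomposition already set up in Lemma~\ref{lem:concat} verifies the hypotheses of Theorem~\ref{cFADUnion} at every stage.

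First I would argue by induction on $n$, exactly as in Lemma~\ref{lem:concat}. The base case $X^{(1)}=X^\ast=X\setminus\{x_0\}$ is a subspace of $X$, hence has asymptotic dimension at most $k$ (a subspace of a space of asymptotic dimension $\le k$ again has asymptotic dimension $\le k$). For the inductive step I assume $\as(X^{(n-1)})\le k$ and write
\[
X^{(n)}=\bigcup_{\vec x\in X^{(n-1)}}\vec x\cdot X^\ast.
\]
The analogue of Lemma~\ref{lem:Guentner} that one needs here is the uniform asymptotic-dimension statement: since $X^\ast$ has $\as\le k$, the family $\{\vec x\cdot X^\ast\}_{\vec x\in X^{(n-1)}}$ has $\as\le k$ uniformly. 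This is precisely the uniform-$\mathscr{P}$ conclusion of Lemma~\ref{lem:Guentner} specialized to the property $\mathscr{P}=\text{``}\as\le k\text{''}$, so it requires no new argument.

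Next I would exhibit the excising set. Given an entourage $L\in\ast\sE$, choose a symmetric $E\in\sE$ and $m\in\NN$ with $L\subset\langle E,m\rangle$, and set $Y_L=X^{(n-1)}\cdot(B(x_0,E^m)\cap X^\ast)$ exactly as in the proof of Lemma~\ref{lem:concat}. The same computation given there shows that $Y_L$ is coarsely equivalent to $X^{(n-1)}$, so $\as(Y_L)\le k$ by the inductive hypothesis (asymptotic dimension is a coarse invariant), and that the collection $\{\vec x\cdot X^\ast\setminus Y_L\colon \vec x\in X^{(n-1)}\}$ is $\langle E,m\rangle$-disjoint, hence $L$-disjoint, via the lower bound
\[
D^*_E(\vec w,\vec w')=D_E(b,b')+\|\vec c\|_E+\|\vec c'\|^E+D_E(x_0,x)+D_E(x',x_0)\ge m.
\]
With the uniform bound, the coarsely-equivalent excising set of dimension $\le k$, and the $L$-disjointness in hand, the three hypotheses of Theorem~\ref{cFADUnion} are met, and it yields $\as(X^{(n)})\le k$, completing the induction.

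I do not anticipate a genuine obstacle here, since the combinatorial heart of the argument — the decomposition, the choice of $Y_L$, the disjointness estimate, and the coarse equivalence $Y_L\simeq X^{(n-1)}$ — is already carried out verbatim in Lemma~\ref{lem:concat}; this lemma is simply that argument run with the specific property $\as\le k$, where the role of abstract excisive union permanence is played by Theorem~\ref{cFADUnion}. The only point meriting a line of care is confirming that Lemma~\ref{lem:Guentner}, which is stated for a general coarse property $\mathscr{P}$, applies to $\as\le k$; but asymptotic dimension at most $k$ is a coarse invariant, so it is a legitimate choice of $\mathscr{P}$ and the lemma applies directly.
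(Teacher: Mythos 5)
Your proposal is correct and matches the paper's intended argument exactly: the paper states this lemma as an immediate consequence of Theorem~\ref{cFADUnion}, obtained by rerunning the induction of Lemma~\ref{lem:concat} with the same decomposition $X^{(n)}=\bigcup_{\vec x\in X^{(n-1)}}\vec x\cdot X^\ast$, the same excising set $Y_{\langle E,m\rangle}$, and the same disjointness estimate, with Lemma~\ref{lem:Guentner} supplying the uniform bound. No further comment is needed.
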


\begin{lemma}\label{fibering}
If $f:X\to Y$ is a uniformly expansive map of coarse spaces $(X,\sE)$ and $(Y,\sF)$ with $\as Y \le k$ and if coarse fibers of $f$ have asymptotic dimension $n$ uniformly for some $n\in\mathbb{N}$ then $\as X\le (n+1)(k+1)-1$ 
\end{lemma}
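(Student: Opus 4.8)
The plan is to run a Hurewicz-type argument in which a cover of $Y$ is pulled back through $f$ and then refined inside the fibers, so that the multiplicity of the resulting cover of $X$ is the product of the two given bounds. Fix an entourage $E\in\sE$ at which we wish to certify $\as X\le (n+1)(k+1)-1$. Since $f$ is uniformly expansive, $F_0:=(f\times f)(E)$ is an entourage of $Y$. Because $\as Y\le k$, we apply the definition of asymptotic dimension at the scale $F_0$ to obtain families $\V{0},\dots,\V{k}$ of subsets of $Y$ that cover $Y$, are each $F_0$-disjoint, and are all $K_Y$-bounded for a single entourage $K_Y\in\sF$.

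Next I would pull these families back to $X$. For $V\in\bigcup_i\V{i}$, consider $f^{-1}(V)\subseteq X$. If this set is nonempty, pick $a_0\in f^{-1}(V)$ and set $y=f(a_0)\in V$; then for every $a\in f^{-1}(V)$ we have $(y,f(a))\in V\times V\subseteq K_Y$, so $f^{-1}(V)$ is a coarse fiber of $f$ at scale $K_Y$. Now I invoke the hypothesis that coarse fibers have asymptotic dimension at most $n$ uniformly, applied with $L=E$ and $F=K_Y$. This yields a single entourage $K_X\in\sE$, independent of the particular fiber, such that every coarse fiber $A$ at scale $K_Y$ carries families $\mathcal{W}_0^A,\dots,\mathcal{W}_n^A$ that cover $A$, are $E$-disjoint, and are $K_X$-bounded. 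I apply this to each $A=f^{-1}(V)$.

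Finally, for each pair $(i,j)$ with $0\le i\le k$ and $0\le j\le n$, set
\[
\mathcal{U}_{ij}=\bigcup_{V\in\V{i}}\mathcal{W}_j^{f^{-1}(V)}.
\]
There are $(k+1)(n+1)$ such families, and I would verify the three required properties. They cover $X$ because the $\V{i}$ cover $Y$, hence the sets $f^{-1}(V)$ cover $X$, and the $\mathcal{W}_j^A$ cover each fiber. Each $\mathcal{U}_{ij}$ is $K_X$-bounded since each of its constituent sets is. The crux, which I expect to be the main obstacle, is the $E$-disjointness of each $\mathcal{U}_{ij}$: sets arising from a common $V$ are $E$-disjoint by construction of $\mathcal{W}_j^{f^{-1}(V)}$, so the issue is sets coming from different $V\neq V'$ in the same $\V{i}$. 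If $W\in\mathcal{W}_j^{f^{-1}(V)}$, $W'\in\mathcal{W}_j^{f^{-1}(V')}$, and $(x,x')\in E\cap(W\times W')$, then $f(x)\in V$, $f(x')\in V'$, and $(f(x),f(x'))\in(f\times f)(E)=F_0$, contradicting the $F_0$-disjointness of $\V{i}$; hence no such pair exists. This is precisely the step where uniform expansiveness is needed, to transport the disjointness chosen downstairs at scale $F_0$ back to the scale $E$ upstairs. Since $E\in\sE$ was arbitrary and $K_X$ bounds all the families simultaneously, the collection $\{\mathcal{U}_{ij}\}$ witnesses $\as X\le(n+1)(k+1)-1$.
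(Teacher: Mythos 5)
Your proposal is correct and follows essentially the same Hurewicz-type argument as the paper: cover $Y$ at scale $(f\times f)(E)$, refine each pullback $f^{-1}(V)$ using the uniform bound on the fibers, and combine the $(k+1)(n+1)$ resulting families, with disjointness across distinct $V\neq V'$ enforced by pushing an offending pair forward through $f$. Your explicit verification that $f^{-1}(V)$ is a coarse fiber at the scale $K_Y$ bounding the $\V{i}$ is a detail the paper leaves implicit, but the substance of the argument is identical.
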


\begin{proof}
Let $L\in\sE$ be given. Since $\as Y\le k$, we can cover $Y$ by $k+1$-many $(f\times f)(L)$-disjoint families $\V{0},\ldots,\V{k}$ of uniformly bounded subsets of $Y$. 

Next, for each $V\in \cup_i\V{i}$, since coarse fibers of $f$ have $\as\le n$ uniformly, there is a $K\in\sE$ such that there are uniformly $K$-bounded, $L$-disjoint families $\U{0}^V,\ldots,\U{n}^V$ of subsets of $f^{-1}(V)$, whose union covers $f^{-1}(V)$.

Consider the collection
$\W{i,j}=\{U^V\colon V\in \V{j}, U^V\in \U{i}^V\}.$
We claim that this collection (for $j=0,\ldots,k$ and $i=0,\ldots, n$) is a $K$-uniformly bounded, $L$-disjoint collection of subsets of $X$ that covers $X$.

Since the $\V{j}$ cover $Y$ and the collections $\U{i}^V$ cover $f^{-1}(V)$, it is clear that the collection $\W{i,j}$ covers $X$.

Suppose now that we fix $i_0$ and $j_0$ and consider $\W{i_0,j_0}$. We see that \[\bigcup_{W\in\W{i_0,j_0}}\left(W\times W\right)=\bigcup_{V\in\V{j_0}}\bigcup_{U^V\in\U{i_0}^V}\left(U^V\times U^V\right).\] For each $V$, the union $\bigcup_{U^V\in\U{i_0}^V}\left(U^V\times U^V\right)$ is a subset of $K$. Thus, $$\bigcup_{W\in\W{i_0,j_0}}\left(W\times W\right)$$ is a union of subsets of $K$ and hence a subset of $K$.

Suppose now that $W\neq W'$ in some $\W{i,j}$. We can find $V$ and $V'$ in $\V{j}$ such that $W\in \U{i}^V$ and $W'\in\U{i}^{V'}$. If $V=V'$ then $W\times W'\cap L=\emptyset$ by the assumptions on $\V{j}$. In the case that $V\neq V'$, then $W\times W'\cap L\subset (f\times f)^{-1}(V\times V')\cap (f\times f)^{-1}(f\times f)(L)$. Since $V\times V'\cap (f\times f)(L)=\emptyset$, we see that $(f\times f)^{-1}(V\times V')\cap L$ is also empty. 
\end{proof}

\begin{theorem}
Let $(X,\sE)$ be a coarse space with $\as(X)\le k$ and fix $x_0\in X$. Then $\as(\ast X)\le 2k+1$. 
\end{theorem}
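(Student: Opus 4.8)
The plan is to reuse the tree construction from the \hyperref[ThmA]{proof of Theorem~A} verbatim, and then replace the abstract appeal to fibering permanence with the quantitative estimate of Lemma~\ref{fibering}. Concretely, I would again take $T$ to be the tree with vertex set $\{t_{\vec x}\}_{\vec x\in\ast X}$, give it the bounded coarse structure of the edge-length metric, and define $f\colon\ast X\to T$ by $f(\vec x)=t_{\vec x}$. The same computation as in Theorem~A shows that $f$ is uniformly expansive, and since every tree has asymptotic dimension at most $1$, we have $\as T\le 1$.

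The crux is to show that the coarse fibers of $f$ have asymptotic dimension at most $k$ uniformly. Fix $L\in\ast\sE$ and an entourage $F$ in the bounded coarse structure on $T$. As in the proof of Theorem~A, every coarse fiber $A$ of $f$ at scale $F$ satisfies $A\subset\vec y'\cdot X^{(\le 2R)}$ for some $\vec y'\in\ast X$, where $R$ depends only on $F$ and $X^{(\le 2R)}=\bigcup_{m=0}^{\lfloor 2R\rfloor}X^{(m)}$. By Lemma~\ref{FADconcat} each $X^{(m)}$ has $\as\le k$, and since $X^{(\le 2R)}$ is a \emph{finite} union of such sets, finite union permanence for asymptotic dimension (a special case of Theorem~\ref{cFADUnion}) gives $\as X^{(\le 2R)}\le k$. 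Hence for the given $L$ there is an entourage $K$ and $L$-disjoint, $K$-bounded families $\V{0},\dots,\V{k}$ covering $X^{(\le 2R)}$, with $K$ depending only on $L$ and $R$, hence only on $L$ and $F$. Finally, since $D^\ast_K(\vec y'\cdot\vec a,\vec y'\cdot\vec a')=D^\ast_K(\vec a,\vec a')$ (the translation invariance exploited in Lemma~\ref{lem:Guentner}), the translated families $\{\vec y'\cdot V\colon V\in\V{j}\}$ are again $L$-disjoint and $K$-bounded; intersecting each member with $A$ yields families of subsets of $A$ that cover $A$, are $L$-disjoint, and are $K$-bounded, with $K$ independent of the particular fiber. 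This is precisely uniform asymptotic dimension at most $k$ for the coarse fibers.

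With $\as T\le 1$ and the coarse fibers of $f$ having $\as\le k$ uniformly, Lemma~\ref{fibering} (applied with the target-space bound equal to $1$ and the fiber dimension $n=k$) gives
\[\as(\ast X)\le (k+1)(1+1)-1=2k+1,\]
as claimed.

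The only genuine obstacle is the middle step, and specifically its uniformity: one must check that the bounding entourage $K$ furnished for $X^{(\le 2R)}$ can be chosen independently of which fiber $A$ is under consideration, and that precomposing with the translation by $\vec y'$ preserves both $L$-disjointness and $K$-boundedness. Both points follow from the translation invariance of $D^\ast_K$ together with the observation that $R$---and therefore the finite union $X^{(\le 2R)}$ and its covering families---depend only on the scale $F$ and not on the individual coarse fiber. Everything else is a direct transcription of the argument already carried out for Theorem~A, with dimensions tracked numerically rather than merely asserted to be finite.
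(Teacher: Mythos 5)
Your proposal is correct and follows essentially the same route as the paper: the same tree $T$ and map $f$ from Theorem~A, the same containment of fibers in translates of $X^{(\le 2R)}$, Lemma~\ref{FADconcat} for the fiber bound, and Lemma~\ref{fibering} for the final estimate $(k+1)(1+1)-1=2k+1$. The paper's proof is just a terser version of yours; your added details on finite unions and translation invariance are exactly what the paper leaves implicit.
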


\begin{proof} We take $T$ and $f:\ast X\to T$ as in the proof of Theorem~A.
The map $f$ was shown to be uniformly expansive.
By Lemma \ref{FADconcat}, the coarse fibers of $f$ have asymptotic dimension bounded by $k$ uniformly and so, by Lemma~\ref{fibering} we are done.
\end{proof}

\begin{corollary}\label{lem:bounded-coarse-asdim-1}
Let $(B,\sE)$ be a coarse space with basepoint $x_0$. If $B\times B\in\sE$. Then $\as \ast B\le 1$. \qed 
\end{corollary}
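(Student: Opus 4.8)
The plan is to reduce this statement directly to the theorem immediately preceding it, which bounds $\as(\ast X)$ by $2k+1$ whenever $\as(X)\le k$. The only substantive observation is that the hypothesis $B\times B\in\sE$ forces $\as B=0$, after which we simply specialize to $k=0$.

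First I would verify that $B\times B\in\sE$ implies $\as B\le 0$. Given any entourage $E\in\sE$, consider the single-member family $\U{0}=\{B\}$. This family covers $B$; it is vacuously $E$-disjoint, since it contains no two distinct members; and it is $K$-bounded with $K=B\times B$, which is an entourage by hypothesis. Thus for every $E$ we have exhibited one $E$-disjoint, uniformly bounded family covering $B$, which is exactly the assertion $\as B\le 0$. (If $B$ is nonempty this is in fact an equality, but only the upper bound is needed.)

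With $\as B\le 0$ in hand, I would apply the previous theorem with $X=B$ and $k=0$ to obtain $\as(\ast B)\le 2\cdot 0+1=1$, as claimed. There is essentially no obstacle here: all the real content lies in the already-established bound $\as(\ast X)\le 2k+1$, whose proof went through the tree projection $f\colon\ast X\to T$, the concatenation estimate of Lemma~\ref{FADconcat}, and the fibering estimate of Lemma~\ref{fibering}. The corollary merely records the bounded case, and the single line needing checking is the elementary fact that a bounded coarse space has asymptotic dimension zero.
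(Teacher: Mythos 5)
Your proposal is correct and is precisely the derivation the paper intends: the corollary is stated with no proof immediately after the theorem $\as(X)\le k\Rightarrow\as(\ast X)\le 2k+1$, and the only point to check is that $B\times B\in\sE$ gives $\as B\le 0$ via the single vacuously disjoint, bounded family $\{B\}$, exactly as you argue.
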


\section{Property C}\label{sec:Property C}

In this section, we adapt the argument that metric free products preserve asymptotic property C~\cite{bell-nagorko2016} to show that coarse property C~\cite{bell-moran-nagorko2016} is preserved by coarse free products. The main obstacle in proving this is that while in the metric case one has a sequence of numbers $R_1,R_2,\ldots$, which have meaning in the metric space $X$ and the metric space $\ast X$, the entourages in the coarse space $X$ do not necessarily have straightforward analogs in the coarse space $\ast X$. 

\begin{definition}\cite{bell-moran-nagorko2016}
A coarse space $X$ has \df{coarse property C} if and only if for any sequence $E_1\subset E_2\subset\cdots$ of entourages there is a finite sequence $\U{1}, \U{2},\ldots,\U{n}$ of subsets of $X$ such that
\begin{enumerate}
\item $\bigcup_{i=1}^n \U{i}$ forms a cover for $X$;
\item each $\U{i}$ is $E_i$-disjoint; and
\item each $\U{i}$ is bounded.
\end{enumerate}
\end{definition}

\begin{theorem}
Let $(X,E)$ be a coarse space. Assume that there is a $k\ge 1$ such that for every infinite sequence $E_1\subset E_2\subset\cdots$ of entourages there is a finite sequence $\U{1},\U{2},\ldots,\U{n}$ of subsets of $X$ such that
\begin{enumerate}
	\item $\bigcup_i\U{i}$ covers $X$;
    \item $\U{i}$ is $E_i$-disjoint; and
    \item $\U{i}$ has asymptotic dimension bounded by $k-1$ uniformly.
\end{enumerate}
Then $X$ has coarse property C.
\end{theorem}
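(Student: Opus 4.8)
The plan is to take the hypothesized families of uniform asymptotic dimension $\le k-1$ and refine each of them one further time, converting dimension into boundedness at the cost of multiplying the number of families by $k$. The crucial bookkeeping device is to feed the assumption not the given sequence but a subsequence spaced $k$ apart, so that after refinement the disjointness scales land exactly where coarse property C requires them. Concretely, given $E_1\subset E_2\subset\cdots$, I would first apply the hypothesis to the thinned sequence $F_i:=E_{ik}$, obtaining a finite sequence of families $\U{1},\ldots,\U{n}$ that covers $X$, with each $\U{i}$ being $E_{ik}$-disjoint and of asymptotic dimension at most $k-1$ uniformly.

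Next, for each fixed $i$ I would apply the characterization of uniform asymptotic dimension of the total space $T(\U{i})$ established above, at the \emph{single} scale $E_{ik}$: this yields an entourage $K_i\in\sE$ and, for each member $U\in\U{i}$, subfamilies $\V{0}^U,\ldots,\V{k-1}^U$ of subsets of $U$ whose union covers $U$, each of which is $E_{ik}$-disjoint and $K_i$-bounded. Collecting across members, I set $\W{j}^i:=\bigcup_{U\in\U{i}}\V{j}^U$ for $j=0,\ldots,k-1$. Each $\W{j}^i$ is $K_i$-bounded, and I would verify that it is $E_{ik}$-disjoint by distinguishing two cases: two distinct members of $\W{j}^i$ contained in the same $U$ are $E_{ik}$-disjoint because $\V{j}^U$ is, while two members contained in distinct $U\neq U'$ are $E_{ik}$-disjoint because $(U\times U')\cap E_{ik}=\emptyset$ by the $E_{ik}$-disjointness of $\U{i}$.

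Finally I would assemble these $nk$ families into a single sequence by placing $\W{j}^i$ in position $m=(i-1)k+j+1$, so that as $i$ ranges over $1,\ldots,n$ and $j$ over $0,\ldots,k-1$ the index $m$ runs over $1,\ldots,nk$ without repetition. Since $m\le ik$ we have $E_m\subset E_{ik}$, so the family in position $m$ is $E_m$-disjoint; it is bounded; and the whole collection covers $X$ because the $\U{i}$ cover $X$ and the $\V{j}^U$ cover each $U$. This is precisely a coarse property C decomposition subordinate to $E_1\subset E_2\subset\cdots$, so the theorem follows.

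The step I expect to require the most care is the alignment of the disjointness scales: choosing the spacing of the input subsequence to be exactly $k$ is what allows each block of $k$ refined families to absorb the $k$ consecutive scales it is responsible for, using only the monotonicity $E_m\subset E_{ik}$ for $m\le ik$. A secondary point worth stating explicitly is that the asymptotic-dimension decomposition of the abstract total space transfers verbatim to $E_{ik}$-disjoint families of genuine subsets of $X$; this holds because the members of each $\V{j}^U$ are honest subsets of $U\subset X$ and $E$-disjointness is tested against the same entourage in both settings.
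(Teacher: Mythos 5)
Your proposal is correct, and it is essentially the argument the paper has in mind: the paper does not write out a proof but defers to the metric-space version (Bell--Nag\'orko, Lemma~6.1), whose proof is exactly this scheme of thinning the scale sequence by a factor of $k$, refining each family of uniform asymptotic dimension $\le k-1$ into $k$ bounded subfamilies at scale $E_{ik}$ via the total-space characterization, and interleaving so that the family in position $m\le ik$ inherits $E_m$-disjointness from $E_m\subset E_{ik}$. Your two-case verification of the disjointness of the refined families and the index bookkeeping are both sound.
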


The proof is the same as \cite[Lemma~6.1]{bell-nagorko2016}

\begin{definition}
Let $(X,\sE)$ be a coarse space with basepoint $x_0$.
A subset $A\subset \ast X$ is said to be \df{flat} if there is some $\vec{x}\in\ast X$ such that $A\subset \vec{x}\cdot X$.
Let $E\in\sE$ and $A\subset \ast X$. Define the $E$-\df{cone} $\con_E(A)=A\cdot (\ast B(x_0,E))$, where $B(x_0,E)$ is the symmetric ball $\{x\in X\colon (x_0,x)\in E\cup E^{-1}\}$.
\end{definition}

\begin{lemma} \label{lem:cones-over-flats} Let $\{A_\alpha\}_{\alpha\in J}$ be a collection of uniformly bounded flat subsets of $\ast X$. Then, for each entourage $L\in\sE$, the collection $\{\con_L A_\alpha\}_{\alpha\in J}$ has asymptotic dimension bounded by $1$ uniformly.
\end{lemma}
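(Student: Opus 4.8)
The plan is to realise each cone $\con_L A_\alpha$ as fibering over a tree with uniformly bounded fibers, and to handle the uniformity over $\alpha$ by passing to total spaces, so that a single application of Lemma~\ref{fibering} (base a tree, so $k=1$; bounded fibers, so $n=0$) yields $\as\le(0+1)(1+1)-1=1$. Concretely I would reuse the tree $T$ and the vertex map $\vec z\mapsto t_{\vec z}$ from the proof of \hyperref[ThmA]{Theorem~A}. For each $\alpha$ let $T_\alpha\subseteq T$ be the subtree spanned by $\{t_{\vec z}\colon \vec z\in\con_L A_\alpha\}$ with the edge-length metric, form the total spaces $\mathbb{T}=T(\{\con_L A_\alpha\};J)$ and $\mathbb{T}'=T(\{T_\alpha\};J)$, and let $P\colon\mathbb{T}\to\mathbb{T}'$ be $(\vec z,\alpha)\mapsto(t_{\vec z},\alpha)$. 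Since $\as\mathbb{T}\le 1$ is exactly the assertion that $\{\con_L A_\alpha\}$ has $\as\le 1$ uniformly, this reduces the lemma to checking the hypotheses of Lemma~\ref{fibering} for the single map $P$.

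First I would record the data coming from the hypotheses. Because $\{A_\alpha\}$ is uniformly bounded I may fix a symmetric $E_M\in\sE$ and $n_M\in\NN$ so that the family is $\langle E_M,n_M\rangle$-bounded; writing each flat as $A_\alpha=\vec x_\alpha\cdot A'_\alpha$ with $A'_\alpha\subseteq X$, the translation invariance of $D^\ast$ gives $D_{E_M}(a,a')\le n_M$ for all $a,a'\in A'_\alpha$, uniformly in $\alpha$. The symmetric ball $B=B(x_0,L)$ is bounded. The base $\mathbb{T}'$ has $\as\le 1$: every $T_\alpha$ is a tree, and the two-family covers witnessing $\as\le 1$ for a tree at a given scale (the level-block $2$-colouring) are produced by one uniform rule, so $\{T_\alpha\}$ has $\as\le 1$ uniformly. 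Uniform expansiveness of $P$ is the same order estimate as in \hyperref[ThmA]{Theorem~A}: if $D^\ast_E(\vec z,\vec z')\le n$ then $d_T(t_{\vec z},t_{\vec z'})\le n+1$, and this holds verbatim inside each summand of the total space.

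The crux is showing that the coarse fibers of $P$ are uniformly bounded, so that they have $\as\le 0$ uniformly. A coarse fiber at a tree-scale of radius $R$ consists, for a single $\alpha$, of those $\vec z\in\con_L A_\alpha$ with $t_{\vec z}$ within tree-distance $R$ of a fixed vertex; any two such $\vec z=\vec x_\alpha a\vec w$ and $\vec z'=\vec x_\alpha a'\vec w'$ (with $a,a'\in A'_\alpha\cup\{x_0\}$ and $\vec w,\vec w'\in \ast B$) then satisfy $d_T(t_{\vec z},t_{\vec z'})\le 2R$, so the two suffixes following the meet $\vec z\wedge\vec z'$ have at most $2R$ letters in total. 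Setting $L^\ast=E_M\cup(L\cup L^{-1})$ (symmetric), I would estimate $D^\ast_{L^\ast}(\vec z,\vec z')$ via the formula $D_{L^\ast}(b,b')+\|\cdot\|_{L^\ast}+\|\cdot\|^{L^\ast}$ for the split letters $b,b'$ and the two suffixes. The key observation is that the split letters are always of the same type: either both lie in $A'_\alpha$ (the split is at the base, giving $D_{L^\ast}(b,b')\le D_{E_M}(b,b')\le n_M$) or both lie in $B$ (the split is inside a cone, giving $D_{L^\ast}(b,b')\le 2$); they are never mixed, because the single base letter precedes every cone letter. The one delicate point is that a split letter can be $x_0$ only at the base, and only when $x_0\in A'_\alpha$; but then the uniform diameter bound forces $D_{E_M}(x_0,a')\le n_M$ for every base letter, so this case is controlled as well. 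Since the two suffixes together have at most $2R$ letters, all lying in $B$ and hence each contributing at most $2$ to the relevant norm, while the single split contributes at most $n_M$, I obtain $D^\ast_{L^\ast}(\vec z,\vec z')\le n_M+4R$, a bound independent of $\alpha$; thus every coarse fiber of $P$ is $\langle L^\ast,\,n_M+4R\rangle$-bounded.

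With $P$ uniformly expansive, base $\mathbb{T}'$ of $\as\le 1$, and coarse fibers uniformly bounded, Lemma~\ref{fibering} gives $\as\mathbb{T}\le 1$, which is the claim. I expect the main obstacle to be exactly this fiber-boundedness step: in the free-product metric two elements at equal cone-depth but in different branches are far apart, so the naive projection that collapses the base has unbounded fibers. Fibering instead over the fine tree $\vec z\mapsto t_{\vec z}$, and exploiting that splits occur among letters of a single type while the basepoint is tamed by the uniform diameter bound on the $A'_\alpha$, is what makes the fibers uniformly bounded and forces the dimension down to $1$.
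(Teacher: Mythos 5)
Your proof is correct, but it takes a genuinely different route from the paper's. The paper disposes of the lemma in two lines: since each $A_\alpha$ is flat and the family is uniformly bounded, $A_\alpha\subset \vec{x}_\alpha\cdot B(x_0,K^n)$ for a single entourage $K$ and integer $n$, hence $\con_L A_\alpha\subset \vec{x}_\alpha\cdot \ast B(x_0,K^n\cup L)$; it then cites Corollary~\ref{lem:bounded-coarse-asdim-1} (the free product of a bounded coarse space has asymptotic dimension at most $1$), Lemma~\ref{lem:Guentner} (translates inherit a coarse property uniformly), and monotonicity of asymptotic dimension under subsets. You bypass both of those results and instead run the fibering argument directly on the cones, mapping each $\con_L A_\alpha$ to a subtree of $T$ and verifying by hand that the coarse fibers are uniformly bounded before invoking Lemma~\ref{fibering} with $k=1$, $n=0$. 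Since Corollary~\ref{lem:bounded-coarse-asdim-1} is itself deduced from that same fibering lemma, the two arguments ultimately rest on the same mechanism; yours essentially inlines it. The paper's route buys brevity and reuse of established permanence results; yours buys a self-contained computation that makes explicit where the bound $1$ comes from, including the one genuinely delicate point --- the ``mixed'' split that occurs when a word of $\con_L A_\alpha$ has $x_0$ as its base letter --- which you correctly control with the uniform diameter bound on the sets $A'_\alpha$ (your constant $n_M+4R$ is generous, as $n_M+1+2R$ suffices, but that is immaterial). Either proof is acceptable.
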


\begin{proof}
By assumption there is some $K\in\sE$ and an integer $n$ such that $\bigcup_\alpha A_\alpha\times A_\alpha\subset \langle K,n\rangle$. Since each $A_\alpha$ is flat, there is (for each $\alpha$) an $\vec{x}_\alpha\in\ast X$ such that $A_\alpha\subset \vec{x}_\alpha\cdot X$. Therefore, $A_\alpha\subset \vec{x}_\alpha\cdot B(x_0,K^n)$ for each $\alpha$. 

Now, $\con_L A_\alpha\subset \vec{x}_\alpha\cdot B(x_0,K^n)\cdot \ast B(x_0,L)\subset \vec{x}_\alpha\cdot \ast B(x_0,K^n\cup L)$. We apply Corollary~\ref{lem:bounded-coarse-asdim-1} and Lemma~\ref{lem:Guentner} to complete the proof. 

\end{proof}

\begin{definition}
Let $(X,\sE)$ be a coarse space and suppose $E\in\sE$. A set $S\subset X$ is \df{$E$-connected} if for every $x,y\in S$ there is a finite sequence $x = s_0,s_1,\ldots,s_n = y$ of points of $S$ such that $(s_i,s_{i+1})\in E$ for each $i$. An \df{$E$-connected component} of $X$ is a maximal $E$-connected subset of $X$.
\end{definition}

\begin{lemma}
Let $(X,\sE)$ be a coarse space with $x_0\in X$. Suppose that $E\in\ast\sE$. Take some $L\in\sE$ and $n$ such that $E\subset\langle L,n\rangle$ and suppose that $A\subset \ast X\cdot\left(X\setminus B(x_0,L^n)\right)$ has the property that $E$-connected components of $A$ are uniformly bounded. Then, for each $M\in\sE$, the $E$-connected components of $\con_MA$ have asymptotic dimension at most $1$ uniformly.
\end{lemma}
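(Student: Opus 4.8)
The plan is to first analyze the $E$-connected components of $A$, then realize $\con_M A$ as a union of cones over these components and apply Lemma~\ref{lem:cones-over-flats}, and finally pass from the family of cones to the $E$-components of $\con_M A$ by a union-permanence argument.

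The crucial first observation is that the hypothesis $A\subseteq \ast X\cdot(X\setminus B(x_0,L^n))$ together with $E\subseteq\langle L,n\rangle$ forces every $E$-connected component of $A$ to be flat. Indeed, suppose $\vec u,\vec v\in A$ with $(\vec u,\vec v)\in E$, and write $\vec u=\vec r b\vec c$ and $\vec v=\vec r b'\vec c'$ with $\vec r=\vec u\wedge\vec v$ and $b\neq b'$. Every element of $A$ ends in a letter outside $B(x_0,L^n)$, and since $E\subseteq\langle L,n\rangle$ we have $D_L^*(\vec u,\vec v)=D_L(b,b')+\|\vec c\|_L+\|\vec c'\|^L\le n$. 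A nonempty $\vec c$ would contribute its final (far) letter to $\|\vec c\|_L$, forcing $\|\vec c\|_L>n$; the same applies to $\vec c'$ via $\|\vec c'\|^L$. Hence $\vec c=\vec c'=\boldsymbol{\epsilon}$, so $\vec u=\vec r b$ and $\vec v=\vec r b'$ differ only in their last letters. Propagating this along $E$-paths and using uniqueness of the (prefix, last letter) splitting of a word shows that all elements of a single $E$-component share a common prefix $\vec p_\alpha$. Thus each component $C_\alpha$ satisfies $C_\alpha\subseteq \vec p_\alpha\cdot X$ and is flat, and by hypothesis the $C_\alpha$ are uniformly bounded.

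By Lemma~\ref{lem:cones-over-flats}, the family $\{\con_M C_\alpha\}_\alpha$ of cones over these uniformly bounded flats has asymptotic dimension at most $1$ uniformly, and $\con_M A=\bigcup_\alpha \con_M C_\alpha$. It then remains to understand how these cones assemble into the $E$-components of $\con_M A$. A computation with $D_L^*$ analogous to the one above shows that an $E$-edge joining $\con_M C_\alpha$ to $\con_M C_{\alpha'}$ can occur only when one prefix extends the other by a far letter followed by a word drawn from the cone ball $B(x_0,M)$, and only between the bounded-depth parts of the two cones. In other words the cones are glued in a tree-like pattern, with all gluing taking place near the cone roots; morally each $E$-component of $\con_M A$ is coarsely a tree of cones, each of which is itself coarsely a tree.

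Finally I would pass to the $E$-components of $\con_M A$ by a uniform version of the union-permanence theorem, Theorem~\ref{cFADUnion}. Given $P\in\ast\sE$, choose $L',n'$ with $P\subseteq\langle L',n'\rangle$ and excise the bounded-depth cone $Y_P=A\cdot B^\dagger$, where $B^\dagger=\{\vec b\in \ast B(x_0,M)\colon \|\vec b\|_{L'}\le 2n'\}$ is a bounded set; the deep parts $\{\con_M C_\alpha\setminus Y_P\}$ are then $P$-disjoint. Each piece $C_\alpha\cdot B^\dagger$ is uniformly bounded and the pieces are nested in the same tree-like pattern, so by collapsing each bounded piece to a vertex of the nesting tree (whose fibers are uniformly bounded and whose metric is comparable to $D^*$) the restriction of $Y_P$ to any $E$-component is coarsely a tree and hence has asymptotic dimension at most $1$, uniformly. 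Applying union permanence within each $E$-component, uniformly in the component, would then yield that the $E$-components of $\con_M A$ have asymptotic dimension at most $1$ uniformly. I expect this last step to be the main obstacle: one must verify the $P$-disjointness of the deep cone parts and the uniform asymptotic-dimension bound on the excised tree of bounded pieces, i.e. carry out the union-permanence estimate uniformly over all $E$-components simultaneously rather than one component at a time.
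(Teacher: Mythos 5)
Your first step is sound and matches the spirit of the paper: the hypothesis that every element of $A$ ends in a letter outside $B(x_0,L^n)$, combined with $E\subseteq\langle L,n\rangle$, does force the relevant pieces to be flat and lets you invoke Lemma~\ref{lem:cones-over-flats}. But the assembly step --- passing from the family $\{\con_M C_\alpha\}_\alpha$ to the $E$-connected components of $\con_M A$ --- is exactly where your argument stops being a proof, and you say so yourself. The ``tree of cones'' picture, the $P$-disjointness of the deep cone parts, and the uniform asymptotic-dimension bound on the excised set $Y_P$ are all asserted rather than verified, and Theorem~\ref{cFADUnion} as stated gives a bound on a single union, not a bound that is uniform over an infinite family of components; you would need to formulate and prove a uniform (total-space) version of it. As written, the proposal is a plan with a genuine gap at its final and hardest step.

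The idea you are missing is that no union permanence is needed at all, because each $E$-connected component of $\con_M A$ is contained in a \emph{single} cone over a \emph{single} uniformly bounded flat set. The paper fixes a component $C$, sets $C_k=C\cap X^{(\le k)}$, and takes $k_0$ minimal with $C_{k_0}\neq\emptyset$. An argument like your flatness computation shows $C_{k_0}\subseteq A$ is flat and uniformly bounded. Then one proves by induction on $k$ that $C_k\subseteq \con_{M\cup L^n\cup D}C_{k_0}$, where $D$ bounds the $E$-components of $A$: any $\vec x\in C_{k+1}\setminus C_k$ either already lies in $\con_M C_k$ or lies in an $E$-component of $A$ that is $E$-close to $\con_M C_k$, and in either case it lies in $\con_{M\cup L^n\cup D}C_k$; the idempotence $\con_{M'}\con_{M'}C_{k_0}=\con_{M'}C_{k_0}$ closes the induction. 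Hence $C\subseteq\con_{M\cup L^n\cup D}C_{k_0}$, and Lemma~\ref{lem:cones-over-flats} applied to the family $\{C_{k_0}\}$ (one per component, all uniformly bounded and flat) together with monotonicity of asymptotic dimension finishes the proof. In short: the cones you are trying to glue together already coalesce into one enlarged cone per component, which is why the paper's argument is short and yours runs into the obstacle you identified.
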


\begin{proof} We prove this first under the assumption that $E=\langle L,n\rangle$. The general case follows from the fact that $E$-connected components are contained in some $\langle L,n\rangle$-connected component and the fact that asymptotic dimension is monotonic on subsets.

Following the method of \cite[Lemma 6.11]{bell-nagorko2016}, we can characterize the $E$-connected components of $\con_M(A)$ as follows:
$C$ is an $E$-connected component of $\con_MA$ if and only if $X^{(\le n)}\cap C$ is an $E$-connected component of $X^{(\le n)}\cap \con_MA$. 

Let $C$ be an $E$-connected component of $\con_MA$. Put $C_k=C\cap X^{(\le k)}$. Let $k_0$ be the smallest integer for which $C_{k_0}\neq \emptyset$. We claim that $C_{k_0}$ is flat and uniformly bounded.

Take two words $\vec{x}$ and $\vec{y}$ in $C_{k_0}$. Then, $\vec{x}$ and $\vec{y}$ are in $\con_MA$ and there is an $E$-chain $\left(\vec{t}_i\right)$ of elements of $C$ connecting them. By our observation above, we may take the $\vec{t}_i$ in $C_{k_0}$.

Write $\vec{t}_1=\vec{a}_t\cdot b_t z_l\cdots z_{k_0}$ with $\vec{a}_{t}\cdot b_t\in A$, and $b_t\in X\setminus B(x_0,L^n)$. Similarly, write $\vec{x}=\vec{a}_x\cdot b_x x_{l}\cdots x_{k_0}$. Since $(\vec{x},\vec{t}_1)\in E$, we have $D^\ast_L(\vec{x},\vec{t}_1)\le n$. Since $\|b_x\|>n$ and $\|b_t\|>n$, we see that $\vec{a}_x=\vec{a}_t$, and in particular, 
\begin{equation}\label{eqn:1} n\ge D^\ast_L(\vec{x},\vec{t}_1)=D_L(b_x,b_t)+\sum_{j=l}^{k_0}\left(\|x_j\|_L+\|z_j\|^L\right).\end{equation} 

Next, we suppose that the words $z_l\cdots z_{k_0}$ and $x_{l}\cdots x_{k_0}$ are non-empty. Then, the minimality of $k_0$ means that $(\vec{a}b_xx_l\cdots x_{k_0-1}, \vec{x})\notin E$. Thus, $\|x_{k_0}\|_L>n$, contradicting Equation~\eqref{eqn:1}. We conclude that $C_{k_0}\subset A$ and is therefore uniformly bounded. We conclude also that $\vec{a}_x=\vec{a}_y$ and that $C_{k_0}$ is therefore flat.

We show by induction on $k$ that \[C_k\subset \con_{M\cup L^n\cup D} C_{k_0} \]

Let $\vec{x}\in C_{k+1}\setminus C_k$, with $k\ge k_0$. Then, either $\vec{x}\in \con_M C_k $ or by the argument above, $\vec{x}$ lies in some $E$-connected component of $A$ that is also $E$-close to $\con_M C_k$. In the first case, we see that $\vec{x}\in \con_{M\cup L^n\cup D}C_k$. In the second case, if $D\in\sE$ is a bound on the diameter of $A$, $\vec{x}\in\con_{M\cup L^n\cup D}C_k$. Since $\con_{M\cup L^n\cup D}\con_{M\cup L^n\cup D}C_k=\con_{M\cup L^n\cup D}C_k$, we have proved our claim. By Lemma~\ref{lem:cones-over-flats}, the $E$-connected components of $\con_M A$ have asymptotic dimension at most $1$ uniformly. 
\end{proof}

\begin{theorem} Let $X$ be a coarse space with fixed basepoint $x_0$. If $X$ has coarse property $C$, then $\ast X$ has coarse property C.
\end{theorem}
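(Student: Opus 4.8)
The plan is to invoke the criterion for coarse property~C proved above (the theorem following the definition): it suffices to show that for every nested sequence $E_1\subset E_2\subset\cdots$ of entourages of $\ast\sE$ there is a finite list of collections covering $\ast X$ that are disjoint at the prescribed scales and have asymptotic dimension at most $1$ uniformly (the case $k=2$). First I would normalize the sequence: choose symmetric entourages $L_1\subset L_2\subset\cdots$ in $\sE$ and integers $n_1\le n_2\le\cdots$ with $E_i\subset\langle L_i,n_i\rangle$, so that the symmetric balls $B(x_0,L_1^{n_1})\subset B(x_0,L_2^{n_2})\subset\cdots$ are nested. Feeding the sequence $F_i=L_i^{n_i}$ into the hypothesis that $X$ has coarse property~C produces a finite list $\V{1},\ldots,\V{m}$ of collections of subsets of $X$ that cover $X$, with each $\V{j}$ being $L_j^{n_j}$-disjoint and uniformly bounded.

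Now set $M=L_m^{n_m}$, the largest threshold, which is available once $m$ is known. I would take $\U{0}=\{\ast B(x_0,M)\}$; since $B(x_0,M)$ is bounded, Corollary~\ref{lem:bounded-coarse-asdim-1} gives $\as\ast B(x_0,M)\le 1$, and as a one-element family it is disjoint at every scale. For $j=1,\ldots,m$ put
\[A_j=\{\,\vec w\cdot v\colon \vec w\in\ast X,\ v\in\textstyle\bigcup\V{j},\ v\notin B(x_0,M)\,\},\]
and let $\U{j}$ be the collection of $E_j$-connected components of $\con_M A_j$. To see that these collections cover $\ast X$, take any word $\vec x=x_1\cdots x_k$. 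If every letter lies in $B(x_0,M)$ then $\vec x\in\ast B(x_0,M)$ and is covered by $\U{0}$; otherwise let $x_\ell$ be the last letter outside $B(x_0,M)$ and choose $j$ with $x_\ell\in\bigcup\V{j}$. Then the prefix $x_1\cdots x_\ell$ lies in $A_j$, while the tail $x_{\ell+1}\cdots x_k$ has all its letters in $B(x_0,M)$ and so lies in $\ast B(x_0,M)$; hence $\vec x\in\con_M A_j$ is covered by $\U{j}$.

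Each $\U{j}$ (for $j\ge 1$) consists of $E_j$-connected components and is therefore $E_j$-disjoint, while $\U{0}$ is disjoint at every scale, so the collections are disjoint at the scales the criterion requires. It then remains to check that each $\U{j}$ has asymptotic dimension at most $1$ uniformly, which I would obtain from the lemma on $E$-connected components of cones applied with $E=E_j$, $L=L_j$, $n=n_j$, and cone radius $M$. Its first hypothesis, $A_j\subset\ast X\cdot\big(X\setminus B(x_0,L_j^{n_j})\big)$, holds because $B(x_0,L_j^{n_j})\subset B(x_0,M)$. Its second hypothesis—that the $E_j$-connected components of $A_j$ are uniformly bounded—is the key computation: if $\vec w\cdot v$ and $\vec w'\cdot v'$ in $A_j$ are $\langle L_j,n_j\rangle$-close, then since $\|v\|_{L_j},\|v'\|_{L_j}>n_j$ (because $v,v'\notin B(x_0,M)$ and $L_j\subset L_m$) the branch point cannot precede these last letters, forcing $\vec w=\vec w'$; then $(v,v')\in L_j^{n_j}$, and $L_j^{n_j}$-disjointness of $\V{j}$ places $v,v'$ in a common bounded member $V$. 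So each component sits inside a translate $\vec w\cdot V$ of a uniformly bounded set. The criterion then delivers coarse property~C for $\ast X$.

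I expect the main obstacle to be exactly the entourage bookkeeping flagged at the start of the section: unlike the metric case, no single real scale transfers from $X$ to $\ast X$, so the decomposition scales $F_i=L_i^{n_i}$ and the common threshold and cone radius $M=L_m^{n_m}$ must be chosen so that three conditions hold simultaneously—every $A_j$ lands in the far region demanded by the cone lemma, the $L_j^{n_j}$-disjointness of $\V{j}$ survives as \emph{bounded} $E_j$-connected components of $A_j$, and the single threshold $M$ still captures all small tails for coverage. The observation that reconciles these is that a letter lying outside the largest ball $B(x_0,M)$ automatically lies outside every smaller ball $B(x_0,L_j^{n_j})$, so it is simultaneously ``large'' at all levels $j$; this is what makes the uniform choice $M=L_m^{n_m}$, taken only after $m$ is produced by property~C of $X$, work at every level at once.
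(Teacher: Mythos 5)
Your proof is correct and follows essentially the same route as the paper's: apply coarse property C of $X$ at the scales $L_i^{n_i}$, delete the large ball $B(x_0,M)$, cone off the resulting flat pieces, and combine the lemma on $E$-connected components of cones with the criterion stated after the definition of coarse property C. The only cosmetic adjustment is that the singleton family $\{\ast B(x_0,M)\}$ should be placed last in the list (as the paper does with $\V{p+1}=\{\{x_0\}\}$) rather than at position $0$, so that each $\U{j}$ occupies an index at which $E_j$-disjointness is exactly what the criterion requires.
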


\begin{proof} Suppose $E_1\subset E_2\subset \cdots$ is a given sequence of entourages in $\ast \sE$. For each $i$ find $L_i\in \sE$ and an integer $n_i$ such that $E_i\subset\langle L_i,n_i\rangle$.  Find a sequence $\U{1},\U{2},\ldots, \U{p}$ of $L_i^{n_i}$-disjoint, uniformly bounded subsets of $X$ whose union covers $X$. Put $n=\max\{n_i\}$.

Put $\V{i}(\vec{x})=\{\vec{x}\cdot (U\setminus B(x_0,L^n_{p+1})\colon U\in\U{i}, \vec{x}\in\ast X\}$ for each $i\in\{1,2,\ldots,p\}$. Put $\V{p+1}=\{\{x_0\}\}$. We claim that 
\begin{enumerate}
\item $\bigcup_{i=1}^{p+1} \con_{L^n_{p+1}}\cup \V{i}=\ast X$ and
\item $\{\V{i}(\vec{x})\}_{i,\vec{x}}$ is $E_i$-disjoint, uniformly bounded, and its elements are flat.
\end{enumerate}

For (1), we consider an element $\vec{x}\in \ast X$. Write $\vec{x}=x_1x_2\cdots x_k$, where each $x_i\in X$. If $(x_0,x_i)\in L_{p+1}^n$ for each $i$, then $\vec{x}\in \con_{L^n_{p+1}}\{x_0\}$. Otherwise, take $m$ to be the largest integer for which $(x_0,x_m)\notin L_{p+1}^n$. Find some $U$ in some $\U{l}$ such that $x_m\in U$. Then, $x\in x_1x_2\cdots x_{m-1}\cdot (U\setminus B(x_0,L_{p+1}^n))\cdot \ast B(x_0,L_{p+1}^n)$. Thus, $\vec{x}\in \con_{L^n_{p+1}}\cup \V{l}$.

For (2), we observe that each $\V{i}$ is uniformly bounded and flat, so it remains only to show that these families are $E_i$-disjoint. Suppose that $V_1$ and $V_2$ are distinct elements of some $\V{i}$. We can find $\vec{x}_1,\vec{x}_2\in \ast X$ and subsets $U_1$ and $U_2$ in $\U{i}$ for which $V_1=\vec{x}_1\cdot \tilde{U_1}$ and $V_2=\vec{x}_2\cdot \tilde{U_2}$, where $\tilde{U_i}$ denotes the set $U_i$ with the ball $B(x_0,L_{p+1}^n)$ removed. If $\vec{x}_1=\vec{x}_2$, then we're done since $\U{i}$ is $L_{i}^{n_i}$-disjoint.

Otherwise, take $\vec{v}_1=\vec{x}_1u_1$ and $\vec{v}_2=\vec{x}_2u_2$. We rewrite $\vec{v}_1 = (\vec{v}_1\wedge \vec{v}_2)b\vec{c}u_1$ and $\vec{v}_2 = (\vec{v}_1\wedge \vec{v}_2)b'\vec{c}'u_2$. We compute \begin{align*}D_{L_{i}}^*(\vec{v}_1,\vec{v}_2) &= D_{L_{i}^{n_i}}(b,b') + \|\vec{c}\|_{L_i} + \|\vec{c}'\|^{L_i} + D_{L_i}(x_0,u_1) + D_{L_i}(u_2,x_0)\\
&\ge D_{L_{i}^{n_i}}(b,b') + \|\vec{c}\|_{L_i} + \|\vec{c}'\|^{L_i} + D_{L_i}(u_1,u_2) \\ & \ge n_i.\end{align*}
Thus we see $\V{i}$ is $\langle L_i,n_i\rangle$-disjoint hence $E_i$-disjoint.

\end{proof}

\bibliographystyle{abbrv}

\end{document}